\newcommand{\COLORON}{0}
\newcommand{\NOTESON}{0}
\newcommand{\Debug}{0}
\newcommand{\comment}[1]{}
\newcommand{\COMMENT}[1]{}
\definecolor{darkgray}{rgb}{0.3,0.3,0.3}
\newcommand{\defi}[1]{{\color{darkgray}\emph{#1}}}
\newtheorem{proposition}{Proposition}[section]
\newtheorem{theorem}[proposition]{Theorem}
\newtheorem{corollary}[proposition]{Corollary}
\newtheorem{lemma}[proposition]{Lemma}
\newtheorem{conjecture}{{Conjecture}}[section]
\newtheorem{problem}[conjecture]{{Problem}}
\newtheorem{examp}[proposition]{Example}
\newcommand{\FIG}{0}
\newcommand{\note}[1]{ 

	\ 

	{\color{blue} \hspace*{-60pt} NOTE: \color{Turquoise}{\small  \tt \begin{minipage}[c]{1.1\textwidth}  #1 \end{minipage} \ignorespacesafterend }} 
	
	\ 
	
	}
\else \newcommand{\note}[1]{} \fi
\newcommand{\afsubm}[1]{ \ifnum \Debug = 1 {\mymargin{#1}}
\fi} 
\newcommand{\fig}[1]{Figure ``{#1}''}
\else \newcommand{\fig}[1]{Figure~\ref{#1}} \fi
\renewcommand{\color}[1]{}
\newcommand{\showFig}[2]{
   \begin{figure}[htbp]
   \centering
   \noindent
   \epsfbox{#1.eps}
   \caption{\small #2}
   \label{#1}
   \end{figure}
}
\newcommand{\R}{\ensuremath{\mathbb R}}
\newcommand{\pth}[2]{\ensuremath{#1}\text{--}\ensuremath{#2}~path}
\newcommand{\flo}[2]{\ensuremath{#1}\text{--}\ensuremath{#2}~flow} 
\newcommand{\g}{\ensuremath{G\ }}
\newcommand{\G}{\ensuremath{G}}
\newcommand{\kcl}{Kirchhoff's cycle law}
\newcommand{\Ex}{\mathbb E}
\newcommand{\Lr}[1]{Lemma~\ref{#1}}
\newcommand{\Tr}[1]{Theorem~\ref{#1}}
\newcommand{\Sr}[1]{Section~\ref{#1}}
\newcommand{\Cr}[1]{Corollary~\ref{#1}}
\renewcommand{\iff}{if and only if}
\newcommand{\fe}{for every}
\newcommand{\Fe}{For every}
\newcommand{\st}{such that}
\newcommand{\obda}{without loss of generality}
\newcommand{\rw}{random walk}
\newcommand{\labequ}[2]{ \begin{equation} \label{#1} #2 \end{equation} }
\newcommand{\labtequc}[2]{ \begin{equation} \label{#1} 	\text{   #2 } \end{equation} }
\newcommand{\mymargin}[1]{
  \marginpar{%
    \begin{minipage}{\marginparwidth}\small%
      \begin{flushleft}%
        {\color{blue}#1}%
      \end{flushleft}%
   \end{minipage}%
  }%
}%
\newcommand{\mySection}[2]{}
\newtheorem{theo}{Theorem}
\newtheorem{cor}[theo]{Corollary}
\theoremstyle{remark}
\newtheorem{rem}{Remark}
\newtheorem{prob}{Problem}
\newcommand{\defn}[1]{
\begin{equation*} {#1}  \end{equation*}
}
\title{Hitting Times, Cover Cost, and the Wiener Index of a Tree}
\author[1]{Agelos Georgakopoulos\thanks{Partly supported by FWF Grant P-24028-N18, EPSRC grant EP/L002787/1, and ERC grant 639046.}}
\author[2]{Stephan Wagner\thanks{Supported by the National Research Foundation of South Africa, grant number 70560.}}
\affil[1]{{Mathematics Institute}\\
 {University of Warwick}\\
  {CV4 7AL, UK}\\}
\affil[2]{Department of Mathematical Sciences\\ Stellenbosch University\\ Private Bag X1, Matieland 7602, South Africa\\ \url{swagner@sun.ac.za}}
\date{\today}
\begin{document}
\thispagestyle{plain}
\maketitle

\begin{abstract}
We exhibit a close connection between hitting times of the simple random walk on a graph, the Wiener index, and related graph invariants. In the case of trees we obtain a simple identity relating hitting times to the Wiener index.

It is well known that the vertices of any graph can be put in a linear preorder so that vertices appearing earlier in the preorder are ``easier to reach'' by a random walk, but ``more difficult to get out of''. We define various other natural preorders 
and study their relationships. These preorders coincide when the graph is a tree, but not necessarily otherwise.

Our treatise is self-contained, and puts some known results relating the behaviour or random walk on a graph to its eigenvalues in a new perspective.

\medskip
{\bf AMS MSC 2010:} 05C81

\comment{ 
	We are particularly interested in the cover cost, i.e., the sum of all hitting times of the random walk starting at a given vertex $x$, and its counterpart, the reverse cover cost. We determine relations between these quantities for trees and determine their extremal values.
}
\end{abstract}

\section{Introduction and Statement of Results}

The \defi{hitting time $H_{vw}$} is the expected number of steps it takes a simple \rw\ on a graph $G$ to go from a vertex $v$ to a vertex $w$. The aim of this paper is to exhibit a close connection between hitting times, the Wiener index, and related graph invariants, especially for trees. The \defi{Wiener index} $W(G)$ of a graph $G$ is the sum  of the distances of all pairs of vertices of $G$:
\defn{W(G):=\sum_{\{x,y\} \subseteq V(G)} d(x,y) = \frac12 \sum_{x \in V(G)} \sum_{y \in V(G)} d(x,y).}
It has been extensively studied, especially for trees (see \cite{dobrynin2001wiener} and references therein), and has found applications in chemistry, communication theory and elsewhere. One of our main results is a rather surprising conection between the Wiener index of a tree and hitting times:

\begin{theo} \label{th1}
\Fe\ tree $T$ and every vertex $v \in V(T)$, we have 
$$\sum_{w\in V(T)} \left( H_{vw} + d(v,w) \right) = 2W(T).$$
\end{theo}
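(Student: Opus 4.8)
The plan is to reduce both sides of the identity to the classical edge representation of the Wiener index of a tree. Set $n:=|V(T)|$. For an edge $e$ of $T$, deleting $e$ splits $T$ into two subtrees; given a vertex $v$, let $n_v(e)$ be the order of the one containing $v$, and put $\bar n_v(e):=n-n_v(e)$. Double-counting, for each pair of vertices, the edges of the connecting path according to which side of each cut the two endpoints lie on yields the two standard identities
\[ W(T)=\sum_{e\in E(T)} n_v(e)\,\bar n_v(e) \qquad\text{and}\qquad \sum_{w\in V(T)} d(v,w)=\sum_{e\in E(T)} \bar n_v(e), \]
the first being independent of $v$. Hence it suffices to prove that $\sum_{w\in V(T)}\bigl(H_{vw}+d(v,w)\bigr)=2\sum_{e\in E(T)} n_v(e)\,\bar n_v(e)$.

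Two elementary facts about the simple random walk on a tree will do the job. First, \emph{hitting times add along paths}: if $v=x_0,x_1,\dots,x_k=w$ is the path from $v$ to $w$ in $T$, then $H_{vw}=\sum_{i=1}^{k} H_{x_{i-1}x_i}$. Indeed, in a tree every walk from $v$ to $w$ passes through $x_1,\dots,x_{k-1}$ in this order before reaching $w$, so this follows from the strong Markov property (formally, by induction on $k$). Second, a \emph{closed form for one-step hitting times}: if $e=uu'$ and $T_u$ denotes the component of $T-e$ containing $u$, then $H_{uu'}=2\,|E(T_u)|+1=2\,n_u(e)-1$. This I would prove by induction on $|E(T_u)|$: a first-step analysis at $u$ writes $H_{uu'}$ in terms of the hitting times $H_{wu}$ over the neighbours $w\neq u'$ of $u$ inside $T_u$ (using that $w$ is separated from $u'$ by $u$, so $H_{wu'}=H_{wu}+H_{uu'}$); each $H_{wu}$ equals $2\,|E(T_w)|+1$ by the induction hypothesis, where $T_w$ is the component of $T-uw$ containing $w$, and solving the resulting linear equation gives $H_{uu'}=2\,|E(T_u)|+1$ since $|E(T_u)| = (\deg_T u - 1)+\sum_w |E(T_w)|$.

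Now combine the two facts. Fix $w\in V(T)$ and let $v=x_0,\dots,x_k=w$ be the path from $v$ to $w$, with edges $e_i=x_{i-1}x_i$. For each $i$, the component of $T-e_i$ containing $x_{i-1}$ is exactly the one containing $v$, so by the second fact $H_{x_{i-1}x_i}=2\,n_v(e_i)-1$, and by the first fact
\[ H_{vw}=\sum_{i=1}^{k}\bigl(2\,n_v(e_i)-1\bigr)=2\sum_{e\in E(P)} n_v(e)-d(v,w), \]
where $P$ is the path from $v$ to $w$; that is, $H_{vw}+d(v,w)=2\sum_{e\in E(P)} n_v(e)$, which also holds trivially for $w=v$. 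Summing over all $w\in V(T)$ and exchanging the order of summation, a fixed edge $e$ contributes the summand $n_v(e)$ for precisely those vertices $w$ lying on the far side of $e$ from $v$, and there are $\bar n_v(e)$ of them. Therefore
\[ \sum_{w\in V(T)}\bigl(H_{vw}+d(v,w)\bigr)=2\sum_{e\in E(T)} n_v(e)\,\bar n_v(e)=2W(T), \]
as desired.

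No step here presents a genuine difficulty; the care is concentrated in verifying the two random-walk facts cleanly — especially pinning down the constant in the one-step formula and justifying path-additivity via the strong Markov property — and in the double-counting of the final display. The vertex $v$ needs no special handling, the $w=v$ term being $0$ on each side.
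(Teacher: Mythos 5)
Your proof is correct, and it takes a genuinely different route from the paper. The paper derives Theorem~\ref{th1} from Tetali's electrical-network formula \eqref{tet}: combining it with Lemma~\ref{treelemma} gives the closed form $H_{xy}=md(x,y)+D(y)-D(x)$ (Lemma~\ref{hit}), which is then summed over $y$ (alternatively, the paper specialises the general-graph identities of Theorem~\ref{thMain}). You instead work entirely combinatorially inside the tree: the edge-crossing time $H_{uu'}=2n_u(e)-1$ obtained by first-step analysis, additivity of hitting times along the unique path (strong Markov property), and the standard edge decompositions of $W(T)$ and $D(v)$. Both of your random-walk facts check out (your induction for the crossing time closes correctly since $|E(T_u)|=(\deg_T u-1)+\sum_w|E(T_w)|$), and your intermediate identity $H_{vw}+d(v,w)=2\sum_{e\in E(P)}n_v(e)$ is an equivalent repackaging of Lemma~\ref{hit}. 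What your approach buys is self-containedness: no effective resistances, no appeal to Tetali's formula, and every constant is pinned down by elementary means. What it gives up is generality: the paper's route through \eqref{tet} works for arbitrary graphs (with $d$ replaced by the resistance metric) and yields Theorems~\ref{RCW}, \ref{thBev} and \ref{thMain} from the same computation, whereas your argument leans on the unique-path structure of trees (both for path-additivity of hitting times and for the exact crossing-time formula) and does not extend beyond them.
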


The sum of the left hand side is dominated by the first subsum $CC(x) := \sum_{w\in V(T)} H_{vw}$. In \cite{cc} this sum is dubbed the \defi{cover cost}, and it is argued that it is related to the \defi{cover time} of a graph, i.e., the expected time for a random walk starting at $v$ to visit all vertices. It is also the object of study in  \cite{PalaciosRenom}, in which lower bounds for $CC(x)$ are proved.
Defining the \defi{centrality} (also known as \defi{distance} of a vertex) $D(r):=\sum_{w\in V(T)}  d(r,w)$, the formula above can be rewritten in the following concise form:\\
%
\begin{equation}\label{CCW}
\fbox{$CC(v)+D(v) = 2W(T)$.}
\end{equation}
Note here also that
$W(T) = \frac12 \sum_{w \in V(T)} D(w)$.
The centrality $D(v)$ is a quantity of interest in combinatorial optimisation: a vertex where the centrality reaches its minimum appears with various names in the literature, including \defi{centroid}, \defi{barycenter} and \defi{median} (the latter in particular in weighted graphs). It is computable in linear time (by a straightforward breadth-first or depth-first search). The same is true for the Wiener index \cite{dankelmann1993computing}, and so we deduce that cover cost is computable in linear time.

In analogy to $CC(v)$ we also define the \defi{reverse cover cost} $RC(v) = \sum_{w \in V(G)} H_{wv}$. 
We will show that
\begin{theo} \label{RCW}
\Fe\ tree $T$ of order $n$ and every vertex $v \in V(T)$, the quantity  
$$RC(v) + (2n-1) CC(v) = 4(n-1)W(T)$$ 
is independent of $v$. Thus a vertex $v$ that maximizes $CC(v)$ minimizes $RC(v)$ and vice versa.
\end{theo}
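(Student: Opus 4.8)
The plan is to combine the already-available identity \eqref{CCW}, namely $CC(v)+D(v)=2W(T)$, with the classical \emph{commute-time identity} for trees,
$$H_{vw}+H_{wv}=2(n-1)\,d(v,w)\qquad\text{for all }v,w\in V(T);$$
once both are in hand, the theorem is pure bookkeeping.

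First I would establish the commute-time identity. The quickest route is the electrical-network formula $H_{vw}+H_{wv}=2|E(G)|\cdot R_{\mathrm{eff}}(v,w)$ for the commute time, together with the elementary observation that in a tree the effective resistance between $v$ and $w$ (unit resistors on the edges) equals the number of edges on the unique $v$–$w$ path, i.e.\ $R_{\mathrm{eff}}(v,w)=d(v,w)$; since $|E(T)|=n-1$ this gives the displayed identity at once. If one prefers a self-contained probabilistic argument, it suffices to treat adjacent $v,w$: deleting the edge $e=vw$ splits $T$ into the component $T_v\ni v$ of order $a$ and the component $T_w\ni w$ of order $n-a$, and a first-step analysis (a walk from $v$ stays inside $T_v$ until the instant it first crosses $e$, which is precisely the instant it reaches $w$) yields $H_{vw}=2a-1$ and, symmetrically, $H_{wv}=2(n-a)-1$, so $H_{vw}+H_{wv}=2(n-1)$. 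The general case then follows because in a tree the $v$–$w$ path $v=u_0u_1\cdots u_k=w$ is unique and any walk from $v$ to $w$ must pass through $u_1,\dots,u_{k-1}$ in this order, whence $H_{vw}=\sum_{i<k}H_{u_iu_{i+1}}$ and $H_{wv}=\sum_{i<k}H_{u_{i+1}u_i}$, and summing the single-edge case over the $k=d(v,w)$ edges gives the claim.

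With the identity available, I would sum it over all $w\in V(T)$:
$$RC(v)=\sum_{w\in V(T)}H_{wv}=\sum_{w\in V(T)}\bigl(2(n-1)\,d(v,w)-H_{vw}\bigr)=2(n-1)\,D(v)-CC(v).$$
Substituting this into the expression of the theorem and then invoking \eqref{CCW},
$$RC(v)+(2n-1)\,CC(v)=2(n-1)\,D(v)+(2n-2)\,CC(v)=2(n-1)\bigl(D(v)+CC(v)\bigr)=4(n-1)\,W(T),$$
which manifestly does not depend on $v$. Finally, since $4(n-1)W(T)$ is a fixed number and the coefficient $2n-1$ of $CC(v)$ is positive, a vertex $v$ maximising $CC(v)$ must minimise $RC(v)$, and conversely, giving the last assertion.

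I do not expect a genuine obstacle here: the only ingredient beyond routine algebra and Theorem~\ref{th1} is the commute-time identity, and the single delicate point in a self-contained proof of it is the additivity $H_{vw}=\sum_{i<k}H_{u_iu_{i+1}}$ of hitting times along a geodesic in a tree, together with the evaluation $H_{vw}=2a-1$ across a single bridge.
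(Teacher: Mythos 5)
Your proof is correct, and it takes a slightly different route from the paper's. The paper first derives the pointwise formula $H_{xy}=md(x,y)+D(y)-D(x)$ (Lemma~\ref{hit}, a direct consequence of Tetali's formula \eqref{tet} together with Lemma~\ref{treelemma}), sums it over $y$ to obtain the explicit identity $RC(x)=(2n-1)D(x)-2W(T)$, and then combines this with $CC(x)=2W(T)-D(x)$. You instead use only the symmetrised version of that lemma, namely the commute-time identity \eqref{com} specialised to trees ($m=n-1$ and $r(v,w)=d(v,w)$), which after summation over $w$ gives $RC(v)=2(n-1)D(v)-CC(v)$, and you then finish with \eqref{CCW}. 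Both ingredients are already available in the paper --- \eqref{com} is stated in the preliminaries and \eqref{CCW} is Theorem~\ref{th1} --- so your argument is, if anything, more economical: it never needs the individual hitting times, only their pairwise sums. What the paper's route buys in exchange is the closed form $RC(x)=(2n-1)D(x)-2W(T)$ itself, which is reused elsewhere (for the equivalences in Theorem~\ref{thBev}, the remark on differences of $RC$, and the asymptotics for random trees). Your optional self-contained derivation of the commute-time identity (the bridge case $H_{vw}=2a-1$ plus additivity of hitting times along the unique $v$--$w$ path, justified by the strong Markov property) is sound, but unnecessary given that \eqref{com} is quoted in Section~\ref{secPrel}; your final algebra, $2(n-1)D(v)+(2n-2)CC(v)=2(n-1)\bigl(D(v)+CC(v)\bigr)=4(n-1)W(T)$, checks out, as does the monotonicity conclusion.
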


Combining Theorems~\ref{th1} and~\ref{RCW} with elementary calculations we obtain the following formula that will be useful later:
\labequ{RCD}{RC(v) = (2n-1) D(v) - 2 W(T)}

\comment{ 
Both Theorem~\ref{th1} and Theorem~\ref{RCW} follow as immediate corollaries of the following formula for hitting times in a tree:

\begin{theo} \label{hit}
Let $x$ and $y$ be two vertices of a tree $T$ with $m$ edges. The hitting time $H_{xy}$ can be expressed as
$$H_{xy} = md(x,y) + D(y) - D(x).$$
\end{theo}
}

It is well known that the vertices of any graph can be put in a linear preorder $\leq$ \st\ vertices appearing earlier in the order are ``easier to reach  but difficult to get out of'', while vertices appearing later behave the other way around; more precisely, whenever $x\leq y$, we have $H_{yx}\leq H_{xy}$  \cite{CoTeWi,LovRan}. Note that it is not clear a priori that such a preorder exists, as there are about $n^2$ values $H_{xy}$ to be compared, but the preorder comprises only $n$ elements. Our next result shows that if the graph is a tree, then this ordering coincides with that of the values of $RC(x)$, the ordering of the values of $CC(x)$ reversed, as well as the orderings induced by further functions. In fact,  an alternative proof of
the existence of such a preorder is given by the equivalence of (ii) and
(iii) in Theorem~3, which will be shown to hold for arbitrary graphs in
Section~5.

We denote the degree of a vertex $w$ by $d(w)$ and define the  \defi{weighted centrality} $D_\pi$ by 
\defn{D_\pi(v):=\sum_{w\in V(T)} d(w) d(v,w)}
and the \defi{weighted cover cost} and \emph{weighted reverse cover cost} analogously by
\defn{CC_\pi(v):=\sum_{w\in V(T)} d(w) H_{vw},\qquad RC_{\pi}(v):= \sum_{w\in V(T)} d(w) H_{wv}.}
With this notation, we have
\begin{theo} \label{thBev}
\Fe\ tree $T$, and every pair of vertices $x,y\in V(T)$, the following are equivalent:
\begin{multicols}{2}
\begin{enumerate}
 \item \label{Bi} $D(x) \leq D(y)$;
 \item \label{Bii} $D_\pi(x) \leq D_\pi(y)$;
 \item \label{Biii} $H_{yx}\leq H_{xy}$;
 \item \label{Biv} $RC_\pi(x)\leq RC_\pi(y)$; 
 \item \label{Bv} $RC(x)\leq RC(y)$; 
 \item \label{Bvi} $CC(x)\geq CC(y)$.
\end{enumerate}
\end{multicols}
\end{theo}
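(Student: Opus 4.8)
The plan is to collapse all six conditions onto a single quantity, the centrality $D$. Concretely, I would show that $-CC$, $RC$, $D_\pi$ and $RC_\pi$ are each a strictly increasing affine function of $D(v)$, and that $H_{xy}-H_{yx}$ is a strictly increasing affine function of $D(y)-D(x)$; every stated equivalence then reads off at once. The key input is the following closed form for hitting times on a tree $T$ with $n$ vertices, which I would establish as a preliminary lemma:
$$H_{uv} = (n-1)\,d(u,v) + D(v) - D(u) \qquad\text{for all } u,v\in V(T).$$
To prove it, first let $uv\in E(T)$ and write $T_u,T_v$ for the two components of $T-uv$, with $a:=|T_u|$, $b:=|T_v|$, so that $a+b=n$. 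An easy computation (via the harmonic equation for hitting times on $T_u$, or via effective resistances) gives $H_{uv}=\sum_{w\in T_u}d(w)=2a-1$, where here $d(w)$ denotes the degree of $w$ in $T$. On the other hand, each vertex of $T_u$ is one step farther from $v$ than from $u$ while each vertex of $T_v$ is one step closer, so $D(v)-D(u)=a-b$, and indeed $(n-1)+(a-b)=2a-1$. For arbitrary $u,v$, let $u=z_0z_1\cdots z_k=v$ be the $u$--$v$~path in $T$; since a walk from $u$ must pass through $z_1,\dots,z_{k-1}$ in order before it can reach $v$, the strong Markov property gives $H_{uv}=\sum_{i<k}H_{z_iz_{i+1}}$, and the lemma follows by telescoping the edge case.

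Granting the lemma, three of the equivalences are immediate. Swapping $u$ and $v$ in the lemma and subtracting yields $H_{xy}-H_{yx}=2\bigl(D(y)-D(x)\bigr)$, which is \ref{Bi}$\,\Leftrightarrow\,$\ref{Biii}. By \Tr{th1} we have $CC(v)=2W(T)-D(v)$, so $CC$ strictly decreases with $D$, which is \ref{Bi}$\,\Leftrightarrow\,$\ref{Bvi}. By \Tr{RCW}, $RC(v)=4(n-1)W(T)-(2n-1)CC(v)$, so $RC$ strictly decreases with $CC$ and therefore strictly increases with $D$, which is \ref{Bvi}$\,\Leftrightarrow\,$\ref{Bv}.

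For the weighted invariants, observe that for an edge $uv$ as above the sum of the degrees (in $T$) of the vertices of $T_u$ equals $2(a-1)+1=2a-1$: the subtree $T_u$ has $a-1$ internal edges, counted twice, plus the single edge $uv$ with exactly one endpoint in $T_u$; likewise the degree sum over $T_v$ is $2b-1$. Hence $D_\pi(v)-D_\pi(u)=(2a-1)-(2b-1)=2(a-b)=2\bigl(D(v)-D(u)\bigr)$, so $v\mapsto D_\pi(v)-2D(v)$ is constant along every edge and thus constant on the connected tree $T$; this gives \ref{Bi}$\,\Leftrightarrow\,$\ref{Bii}. Finally, substituting the hitting-time lemma into the definition of $RC_\pi$ and using $\sum_{w}d(w)=2(n-1)$ yields
$$RC_\pi(v)=(n-1)\,D_\pi(v)+2(n-1)\,D(v)-\sum_{w\in V(T)}d(w)\,D(w),$$
where the final sum is independent of $v$; inserting $D_\pi(v)=2D(v)+\text{const}$ shows $RC_\pi$ is a strictly increasing affine function of $D(v)$, which gives \ref{Bi}$\,\Leftrightarrow\,$\ref{Biv} and closes the loop.

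The only step that is not pure bookkeeping is the hitting-time lemma, and within it the delicate points are the single-edge identity $H_{uv}=2a-1$ and the reduction $H_{uv}=\sum_{i<k}H_{z_iz_{i+1}}$ (which uses that in a tree a walk cannot bypass an intermediate vertex, together with the strong Markov property). Everything afterwards is a short chain of substitutions, so I expect the main obstacle to be organizational: tracking which invariant increases and which decreases with $D$, and checking that the affine coefficients $-1$, $-(2n-1)$, $2$ and $4(n-1)$ are all nonzero, so that the implications are genuine equivalences rather than one-directional.
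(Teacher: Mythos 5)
Your proposal is correct, and it reaches the same central identity as the paper --- $H_{uv}=m\,d(u,v)+D(v)-D(u)$ with $m=n-1$, which is the paper's Lemma~\ref{hit} --- but by a genuinely different route. The paper obtains this formula by specialising Tetali's resistance formula~\eqref{tet} to trees and combining it with the degree-sum identity $D_\pi(v)=2D(v)-m$ of Lemma~\ref{treelemma}; you instead prove it by elementary Markov-chain reasoning: the single-edge identity $H_{uv}=\sum_{w\in T_u}d(w)=2a-1$, plus the observation that in a tree the walk must visit the intermediate vertices of the $u$--$v$ path in order, so hitting times telescope along the path. Your handling of \ref{Biv} also differs: the paper routes $RC_\pi$ through Theorem~\ref{thMain} and the Gutman index, whereas you substitute the hitting-time formula directly into $\sum_w d(w)H_{wv}$ and absorb $\sum_w d(w)D(w)$ as a constant; both computations check out. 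The trade-off is that your argument is self-contained and avoids the electrical-network machinery entirely, but it is intrinsically tree-specific (the telescoping step fails on graphs with cycles), while the paper's detour through effective resistances is exactly what lets it extend \ref{Bii}$\Leftrightarrow$\ref{Biii}$\Leftrightarrow$\ref{Biv} to arbitrary graphs in Section~\ref{secEx}. Your reliance on Theorems~\ref{th1} and~\ref{RCW} for \ref{Bv} and \ref{Bvi} mirrors the paper exactly and introduces no circularity, since both follow from your hitting-time lemma by summation; the remaining affine bookkeeping, including the constancy of $D_\pi-2D$ along edges, is sound.
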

The equivalence of \ref{Bi} to \ref{Bii} is an easy combinatorial observation, see \Sr{secTrees}. In fact, one easily finds $D_\pi(x) = 2D(x) - m$, where $m$ is the number of edges. 

The equivalence of \ref{Biii} and \ref{Biv} has been proved by Beveridge \cite{BevCen}\footnote{Beveridge \cite[Proposition~1.1]{BevCen} asserts a weaker statement, but the same proof applies. It is also proved there that the vertex minimising $D$ also minimises $RC_\pi$.}, but we will provide an alternative proof. The  equivalence of \ref{Bvi} to \ref{Bi} is an immediate consequence of \Tr{th1}, and \ref{Bvi} is equivalent to \ref{Bv} by \Tr{RCW}. The equivalence of \ref{Bii} to \ref{Biii} and \ref{Biv} actually holds in greater generality (see \Sr{secEx}): if one replaces $D_\pi$ by an appropriate quantity, then it remains true for arbitrary graphs. However, the equivalences of \ref{Bii}, \ref{Biii} and \ref{Biv} are the only ones that remain true for arbitrary graphs, as we prove in \Sr{secEx}.


\medskip

The results above can be interpreted as follows: there is a simple  
function $D: V(T)\to \R_+$ the values of which determine the cover cost and reverse cover cost. 
It is natural to ask whether something similar holds for general graphs. Our next result shows that this is indeed the case. Taking advantage of the theory of the relationship between \rw{s} and electrical networks \cite{DoyleSnell,TetRan,XuYau}, we use the following parameters   that can be thought of as generalisations of $D$ and $D_\pi$: let $r(v,w)$ denote the effective resistance between two vertices $v$ and $w$, and define the \defi{resistance-centrality} and \defi{weighted resistance-centrality} by
\defn{R(v):= \sum_{w\in V(T)} r(v,w),\qquad R_\pi(v):= \sum_{w\in V(T)} d(w) r(v,w).}
A well-known generalisation of the Wiener index for non-trees is the \defi{Kirchhoff index} \cite{klein1993resistance,mohar1993novel} or \defi{quasi-Wiener index}, defined as
\defn{K(G) := \sum_{\{x,y\} \subseteq V(G)} r(x,y) = \frac12 \sum_{x \in V(G)} \sum_{y \in V(G)} r(x,y).}
We also define the weighted variants 
\defn{K_\pi(G) :=  \frac12 \sum_{x \in V(G)} \sum_{y \in V(G)} d(y) r(x,y)\qquad \text{and}\qquad K_{\pi^2}(G) :=  \frac12 \sum_{x \in V(G)} \sum_{y \in V(G)} d(x) d(y) r(x,y).}

\begin{theo} \label{thMain}
\Fe\ connected graph $G$, and every vertex $x\in V(G)$, we have 
\begin{align*}
CC(x)&=mR(x) - \frac{n}{2} R_\pi(x)+ K_\pi(G),\\ 
RC(x)&=m R(x) + \frac{n}{2} R_\pi(x) - K_\pi(G),\\
RC_\pi(x)&= 2mR_\pi(x) - K_{\pi^2}(G), \text{ and}\\
CC_\pi(x)&= K_{\pi^2}(G).
\end{align*}
\end{theo}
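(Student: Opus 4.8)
The plan is to derive all four formulas from a single classical identity expressing hitting times through effective resistances --- Tetali's formula \cite{TetRan} ---
$$H_{xy} = m\, r(x,y) + \frac{1}{2}\bigl(R_\pi(y) - R_\pi(x)\bigr),$$
and then to sum over $y$ (with weight $1$ or $d(y)$), recognising the resulting double sums as $K_\pi(G)$ or $K_{\pi^2}(G)$ and using $\sum_{v\in V(G)} d(v) = 2m$.

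First I would prove the displayed identity from the electrical--network viewpoint. Fix $y$ and view $z\mapsto H_{zy}$ as a function on $V(G)$; it is the unique function $g$ with $g(y)=0$ and $\sum_{w\sim z}\bigl(g(z)-g(w)\bigr)=d(z)$ for every $z\neq y$. Regarding $G$ as a network of unit conductances, this says that $g$ is the voltage (normalised to $0$ at $y$) produced by injecting a current of $d(w)$ at each $w\neq y$ and extracting the total at $y$. By superposition, $g=\sum_{w}d(w)\,\phi_{wy}$, where $\phi_{wy}$ is the voltage for a unit current from $w$ to $y$ with $\phi_{wy}(y)=0$ (the $w=y$ term vanishes). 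Using $\phi_{wy}(x)=\frac{1}{2}\bigl(r(x,y)+r(w,y)-r(x,w)\bigr)$ --- which is the polarisation identity for the positive semidefinite quadratic form $v\mapsto v^{\top}L^{+}v$ underlying effective resistance (or follows by superposing the two unit flows and noting $\phi_{wy}(w)=r(w,y)$) --- and evaluating $g$ at $z=x$ gives
$$H_{xy} = \sum_{w} d(w)\cdot\frac{1}{2}\bigl(r(x,y)+r(w,y)-r(x,w)\bigr) = m\,r(x,y)+\frac{1}{2}\bigl(R_\pi(y)-R_\pi(x)\bigr),$$
since $\sum_w d(w)=2m$, $\sum_w d(w)r(w,y)=R_\pi(y)$ and $\sum_w d(w)r(x,w)=R_\pi(x)$.

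The rest is bookkeeping. Summing the identity over $y$ yields $CC(x)=mR(x)+\frac12\sum_y R_\pi(y)-\frac n2 R_\pi(x)$, and $\sum_y R_\pi(y)=\sum_{y,z}d(z)r(y,z)=2K_\pi(G)$ gives the first formula; summing $H_{yx}$ instead merely swaps $R_\pi(x)$ and $R_\pi(y)$ and gives the second. Multiplying by $d(y)$ before summing, in $CC_\pi(x)=\sum_y d(y)H_{xy}$ the terms $m\sum_y d(y)r(x,y)=mR_\pi(x)$ and $\frac12 R_\pi(x)\sum_y d(y)=mR_\pi(x)$ cancel, leaving $\frac12\sum_y d(y)R_\pi(y)=\frac12\sum_{y,z}d(y)d(z)r(y,z)=K_{\pi^2}(G)$; and $RC_\pi(x)=\sum_y d(y)H_{yx}=2mR_\pi(x)-K_{\pi^2}(G)$ by the same computation. (The identity $CC_\pi(x)=K_{\pi^2}(G)$ can alternatively be read off from the random target lemma, i.e.\ that $\sum_y \pi(y)H_{xy}$ does not depend on $x$.)

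The only non-routine ingredient is Tetali's identity, and within its proof the only delicate point is the potential formula $\phi_{wy}(x)=\frac12(r(x,y)+r(w,y)-r(x,w))$; once this is in hand --- or if one simply cites \cite{TetRan} --- the theorem reduces entirely to the elementary summations above.
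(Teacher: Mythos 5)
Your proof is correct and follows essentially the same route as the paper: both derive all four identities by summing Tetali's formula $H_{xy}=m\,r(x,y)+\frac12\bigl(R_\pi(y)-R_\pi(x)\bigr)$ over $y$ with weight $1$ or $d(y)$ and identifying the resulting double sums as $K_\pi(G)$ or $K_{\pi^2}(G)$. The only difference is that you additionally sketch a proof of Tetali's identity via superposition and the polarisation formula for effective resistance, whereas the paper simply cites it; your bookkeeping for the four sums checks out.
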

Note that unlike trees, the three orderings according to $CC,RC$ and $RC_\pi$ are determined by three different functions, namely the functions $D_1(x)=2m R(x) -nR_\pi(x), D_2(x)= 2m R(x) +nR_\pi(x)$  and $D_3(x)= R_\pi(x)$ respectively (all of which are themselves determined by the two functions $R$ and $R_\pi$). This does not a priori mean that these orderings are different, since there is strong dependence between these functions. We will however construct examples showing that no two of these orderings always coincide. 

The fact that $CC_\pi(x)$ is constant is well-known, especially when $CC_\pi(x)$ is expressed as the expected hitting time from $x$ to a random vertex $y$  chosen according to the stationary distribution of \rw\ \cite{AldousFill,KemSne}; moreover, this constant, which is known as the \defi{Kemeny constant}, can be expressed in terms of the eigenvalues of the matrix $M$ of transition probabilities of \g ($m_{ij}= 1/d_i$ if $ij\in E(G)$ and 0 otherwise) as $CC_\pi(x) = {2m} \sum_{\lambda \neq 1} \frac1{1-\lambda}$, where $\lambda$ runs over all eigenvalues $\neq 1$ of $M$, see \cite[Formula 3.3]{LovRan}. It was observed in \cite{CheZhaRes} that the latter expression $ {2m} \sum_{\lambda \neq 1} \frac1{1-\lambda}$ equals  $K_{\pi^2}(G)$, but apparently the resulting fact that $CC_\pi(x) = K_{\pi^2}(G)$ has not been noticed before. It is thus worth pointing out this triple equality:

$$CC_\pi(x) = {2m} \sum_{\lambda \neq 1} \frac1{1-\lambda}= K_{\pi^2}(G).$$

A similar formula is known for the Kirchhoff index:
$$K(G) = n \sum_{\mu \neq 0} \frac{1}{\mu},$$
the sum being over all nonzero Laplacian eigenvalues $\mu$ of $G$, see \cite{dobrynin2001wiener,merris1989edge,mohar1991eigenvalues}. We show that both these eigenvalue formulas can be proven along the same lines, and derive analogous formulas for $R_{\pi}(x)$ and $R(x)$, see Theorem~\ref{thm:eigen} below.


The cover cost $CC(r)$ was proposed in \cite{cc} as a tractable variant of the \defi{cover time} $CT(r)$ ---i.e., the expected time for a \rw\ from $r$ to visit all other vertices of the graph--- which is much harder to compute. 
Combining \Tr{th1} and \Tr{RCW} with results of Aldous \cite{Aldous} and Janson~\cite{Janson}, we deduce  that for uniformly random rooted labelled trees $(T,r)$, the expected value of $CC(r)/|V(T)|$ is of the same asymptotic order as the expected value of the cover time $CT(r)$. This is related to a conjecture of Aldous, see \Sr{ranT}.

Using \Tr{th1} we are able to find the extremal rooted trees for the cover cost: in \Sr{sec:ext} we prove that, for a fixed number of vertices, $CC(x)$ is minimised by the star rooted at a leaf, and maximised by the  path rooted at a midpoint. It turns out that the same rooted trees are extremal for the cover time as well, by theorems of Brightwell \& Winkler \cite{BW2} and Feige \cite{FeiCol} respectively. Moreover, the same trees are extremal also for the Wiener index \cite{dobrynin2001wiener,entringer1976distance}. The hitting time on its own turns out to be, not surprisingly, maximised by the two endpoints of a path (if only trees are considered).

As a further application of our results, we obtain a precise description of the behaviour of $CC$ and $RC$ for random rooted trees (labelled trees, or more generally trees from a simply generated class). Interestingly, the average cover cost is of order $n$ times the average cover time of such a tree, which had been shown by Aldous \cite{Aldous} to be of order $n^{3/2}$; see \Sr{ranT} for more.

\section{Preliminaries}\label{secPrel}

All graphs in this paper are finite and simple.
A {\em random walk} on a graph
$G$ begins at some vertex and when at vertex $x$, traverses one of the edges incident to $x$ according to the uniform probability distribution.

Any finite graph can be seen as a (passive, resistive) electrical network, by considering each edge as a unit resistor, and there is a well-known theory relating the behaviour of the \rw\ on a graph to the behaviour of electrical currents \cite{DoyleSnell,LyonsBook}. We exploit this relationship in this paper by using the following formula of Tetali \cite{TetRan}, expressing hitting times in terms of effective resistances.
\labtequc{tet}{$H_{xy} = \frac12 \sum_{w \in V(G)} d(w) (r(x,y) + r(w,y) - r(w,x)) = mr(x,y) + \frac12 \big(R_{\pi}(y) - R_{\pi}(x)\big)$.}
Here, $r(x,y)$ denotes the \defi{effective resistance} between $x$ and $y$, and can be defined as the potential difference between $x$ and $y$ induced by the unique \flo{x}{y}\ of intensity 1 satisfying \kcl; see \cite{AgCurrents} for details.

Tetali's formula \eqref{tet} is easiest to use when considering sums or differences that cause some of its terms to cancel out. Fore example, the well-known formula of Chandra et al.\ \cite{CRRST} expressing the \defi{commute time} $\kappa_{xy}:= H_{xy} + H_{yx}$ in terms of the effective resistance follows immediately:
\labtequc{com}{$H_{xy} + H_{yx} = 2m r(x,y)$.}

\section{Results for all graphs}
Our first goal will be the proof of Theorem~\ref{thMain}, our results on trees will follow by specialisation. The main tool we will use is Tetali's formula \eqref{tet}; using this, we can express $CC(x)= \sum_y H_{xy}$ as
\begin{align*}
CC(x) &= \frac12 \sum_{y \in V(G)} \sum_{w \in V(G)} d(w) (r(x,y) + r(w,y) - r(w,x)) \\
&=\frac12 \sum_{w \in V(G)} d(w) \sum_{y \in V(G)} r(x,y) + \frac12 \sum_{y \in V(G)} \sum_{w \in V(G)} d(w)r(w,y) -\frac12 \sum_{w \in V(G)} d(w) r(w,x) \sum_{y \in V(G)} 1 \\
&= \frac12 \cdot 2m \cdot R(x) + K_{\pi}(G) - \frac12 \cdot R_{\pi}(x) \cdot n \\
&= mR(x) - \frac{n}2 \cdot R_{\pi}(x) + K_{\pi}(G).
\end{align*}
The proofs of the other three identities in Theorem~\ref{thMain} are similar.

\medskip
While the weighted cover cost $CC_{\pi}(x)$ is independent of the vertex $x$, this is not the case for the ordinary cover cost $CC(x)$. If, however, the graph is regular, then the cover cost is clearly also constant (since we have $CC_{\pi}(x) = kCC(x)$ on a $k$-regular graph), which has already been pointed out by Palacios \cite{PalKir}. The following theorem shows that the converse is also true.

\begin{cor} \label{correg}
The cover cost $CC(x)$ is independent of the starting vertex $x$ \iff\ \g is regular. In this case, we have 
$$CC_{\pi}(x) = K_{\pi^2}(x) = k CC(x) = k K_\pi(G)=k^2 K(G),$$
where $k$ is the vertex degree.
\end{cor}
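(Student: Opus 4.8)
The plan is to read off the equivalence directly from the first identity of Theorem~\ref{thMain}, namely $CC(x) = mR(x) - \tfrac{n}{2}R_\pi(x) + K_\pi(G)$. Since $K_\pi(G)$ does not depend on $x$, the function $x \mapsto CC(x)$ is constant if and only if the function $x \mapsto mR(x) - \tfrac{n}{2}R_\pi(x) = \tfrac12\sum_{w} (2m - n\,d(w))\,r(x,w)$ is constant on $V(G)$. The ``if'' direction is the easy one: if $G$ is $k$-regular then $2m = nk$, so every coefficient $2m - n\,d(w)$ vanishes, hence $mR(x) - \tfrac{n}{2}R_\pi(x) \equiv 0$ and $CC(x) = K_\pi(G)$ is constant. (Alternatively, as the paper already notes, $CC_\pi(x) = kCC(x)$ on a $k$-regular graph and $CC_\pi$ is constant.)

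For the ``only if'' direction I would argue by contradiction: suppose $G$ is connected but not regular, yet $CC$ is constant. Then the function $f(x) := \sum_{w} c_w\, r(x,w)$ with $c_w := 2m - n\,d(w)$ is constant on $V(G)$, and not all $c_w$ are zero (indeed $\sum_w c_w = 2mn - n\cdot 2m = 0$, so if $G$ is non-regular then some $c_w$ are strictly positive and some strictly negative). The key step is to show that a nonzero ``charge vector'' $(c_w)$ with $\sum_w c_w = 0$ cannot produce a constant potential $f$. This is exactly a statement about the graph Laplacian: if we place charge $c_w$ at each vertex $w$, the resulting potential (up to an additive constant) is $L^+ c$ where $L^+$ is the Moore--Penrose pseudoinverse of the Laplacian $L$, and $r(x,w) = L^+_{xx} + L^+_{ww} - 2L^+_{xw}$. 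A clean way to see $f$ cannot be constant: take two vertices $x \ne x'$ and compute $f(x) - f(x') = \sum_w c_w\big(r(x,w) - r(x',w)\big)$; using $r(a,b) = \langle \mathbf 1_a - \mathbf 1_b, L^+(\mathbf 1_a - \mathbf 1_b)\rangle$ and $\sum_w c_w = 0$, this simplifies to $2\langle \mathbf 1_{x'} - \mathbf 1_x, L^+ c\rangle$. Since $c \perp \mathbf 1$ lies in the image of $L$, we have $L^+ c \ne 0$ whenever $c \ne 0$, and $L^+ c$ is orthogonal to $\mathbf 1$; a vector orthogonal to $\mathbf 1$ and nonzero cannot have all coordinate differences equal to zero, so there exist $x, x'$ with $f(x) \ne f(x')$, contradiction.

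The main obstacle is making the electrical/Laplacian bookkeeping in the ``only if'' direction fully rigorous --- in particular being careful that the potential induced by the charges $(c_w)$ is well-defined exactly because $\sum_w c_w = 0$, and that $r(x,w)$ as used in Theorem~\ref{thMain} agrees with the quadratic form $\langle \mathbf 1_x - \mathbf 1_w, L^+(\mathbf 1_x - \mathbf 1_w)\rangle$. Once $G$ is shown to be $k$-regular, the chain of equalities in the displayed formula is routine: $CC_\pi(x) = K_{\pi^2}(G)$ is Theorem~\ref{thMain}; $K_{\pi^2}(G) = k K_\pi(G) = k^2 K(G)$ follows by pulling the constant degree $k$ out of the defining double sums; $CC_\pi(x) = kCC(x)$ follows from $H_{vw}$ appearing with weight $d(w) = k$ in $CC_\pi$; and $kCC(x) = kK_\pi(G)$ is just $k$ times the first line of Theorem~\ref{thMain} with $R, R_\pi$ terms killed as above. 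I would also remark that in the statement ``$K_{\pi^2}(x)$'' should read ``$K_{\pi^2}(G)$'', since this quantity carries no dependence on a vertex.
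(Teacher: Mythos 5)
Your proposal is correct, but your ``only if'' direction takes a genuinely different route from the paper. The paper proves the identity $\sum_{z \sim x} \bigl(CC(x) - CC(z)\bigr) = nd(x) - 2m$ directly from the one-step recurrence $H_{xy} = 1 + \frac{1}{d(x)}\sum_{z\sim x}H_{zy}$ together with the return-time formula $H^+_{xx}=2m/d(x)$; constancy of $CC$ then forces $nd(x)=2m$ for every $x$, i.e.\ regularity. You instead work through Theorem~\ref{thMain} and the representation $r(a,b)=\langle \mathbf 1_a-\mathbf 1_b, L^+(\mathbf 1_a-\mathbf 1_b)\rangle$, showing that the ``charge vector'' $c_w = 2m-nd(w)$ satisfies $c\perp\mathbf 1$, is nonzero precisely when $G$ is non-regular, and then produces the non-constant potential $-2L^+c$ (up to an additive constant) as the $x$-dependent part of $CC$. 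The two arguments are in fact two derivations of the same underlying fact --- applying the graph Laplacian to the function $CC$ yields $nd(x)-2m$ --- but the paper's version is more elementary and self-contained (purely probabilistic, no pseudoinverse needed), whereas yours makes the linear-algebraic structure explicit and would generalise readily to other linear combinations $\sum_w c_w\, r(\cdot,w)$ with $c\perp\mathbf 1$. Your argument does rely on connectivity (so that $\ker L=\operatorname{span}(\mathbf 1)$ and hence $L^+c\neq 0$ for $0\neq c\perp\mathbf 1$) and on verifying that the paper's potential-theoretic definition of $r$ agrees with the quadratic form in $L^+$; both points are standard but should be stated. The ``if'' direction and the chain of equalities match the paper's (immediate from Theorem~\ref{thMain}), and you are right that $K_{\pi^2}(x)$ in the statement should read $K_{\pi^2}(G)$.
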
 

\begin{proof}
We claim that, for every connected graph \G, and every vertex $x$ of \G, we have 
\labtequc{zw}{$\sum_{z \sim x} (CC(x) - CC(z)) = nd(x) - 2m$.}
Note that this claim implies that if $CC(x)$ is independent of the starting vertex $x$ then \g is regular, for the left hand side is 0 in that case. To prove \eqref{zw}, we write
\begin{align*} 
\sum_{z \sim x} (CC(x) - CC(z)) &= \sum_{z \sim x} \sum_y (H_{xy} - H_{zy})\\
&= \sum_{z \sim x} \sum_{y\neq x} (H_{xy} - H_{zy} ) + \sum_{z \sim x} (0 - H_{zx} ) 
\end{align*}
Now note that for $y\neq x$ we have $H_{xy}= 1 + \frac1{d(x)} \sum_{z \sim x}H_{zy}$, since the \rw\ from $x$ moves to one of its neighbours $z$ in its first step. Rearranging this we obtain $\sum_{z \sim x}  (H_{xy} - H_{zy} )=d(x)$. The return time $H^+_{xx}$ to $x$, i.e., the expected time for a \rw\  from $x$ to reach $x$ again, is given by $H^+_{xx}= 2m/d(x)$ \cite[Lemma~1]{BW2}. Using this, and an argument similar to the one above, we obtain $\sum_{z \sim x} - H_{zx} =d(x)-2m$. Plugging these two equalities into the sum above yields \eqref{zw}.

\medskip
Suppose, conversely, that \g is $k$-regular. Then it follows immediately from Theorem~\ref{thMain} that
$$CC_{\pi}(x) = K_{\pi^2}(x) = k CC(x) = k K_\pi(G)=k^2 K(G),$$
as desired.
\end{proof}

It seems to be much harder to characterise those graphs for which the reverse cover cost or the weighted reverse cover cost are constant. Clearly this is the case for transitive graphs, but there might be other examples as well:

\begin{prob}
For which graphs are $RC(x)$ or $RC_{\pi}(x)$ independent of the vertex $x$?
\end{prob}

It is noteworthy that the quantities involved in Theorem~\ref{thMain} can be represented in terms of eigenvalues of matrices associated with the graph $G$. The following theorem makes this more explicit -- two of the identities have already been mentioned in the introduction, we give their short proofs to show the analogy.

\begin{theo}\label{thm:eigen}
For a matrix $A$, let $\mathcal{E}(A)$ denote the set of eigenvalues of $A$. Let $L$ be the Laplacian matrix of a graph $G$, let $M$ be the matrix of transition probabilities and $N = I - M$. For a given vertex $v$, let $L_v$ and $N_v$ be the matrices obtained from $L$ and $N$ by removing the row and column that correspond to $v$. The quantities $K(G)$, $K_{\pi^2}(G)$, $R(G)$ and $R_{\pi}(G)$ can be expressed in terms of eigenvalues of these matrices as follows:
\begin{enumerate}
\item $K(G) = n \sum_{\lambda \in \mathcal{E}(L) \setminus \{0\}} \frac{1}{\lambda}$,
\item $K_{\pi^2}(G) = 2m \sum_{\lambda \in \mathcal{E}(N) \setminus \{0\}} \frac{1}{\lambda}$,
\item $R(v) = \sum_{\lambda \in \mathcal{E}(L_v)} \frac{1}{\lambda}$,
\item $R_{\pi}(v) = \sum_{\lambda \in \mathcal{E}(N_v)} \frac{1}{\lambda}$.
\end{enumerate}
\end{theo}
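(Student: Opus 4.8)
The plan is to derive all four identities from two classical linear-algebra facts: first, that for a real symmetric (or, more generally, diagonalizable) matrix $A$ with nonzero eigenvalues $\lambda_1,\dots,\lambda_k$, one has $\sum_i \lambda_i^{-1} = \operatorname{tr}(A^{+})$ where $A^{+}$ is the Moore--Penrose pseudoinverse (and simply $\operatorname{tr}(A^{-1})$ when $A$ is invertible); second, that effective resistances can be read off from the entries of these (pseudo)inverses. Concretely, I would set up the two ``reduced'' versions $L_v$ and $N_v$ first, since (iii) and (iv) are the base cases from which (i) and (ii) will follow by a summation over $v$.

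\emph{Identities (iii) and (iv).} For $L_v$: it is a standard fact (the matrix-tree/Rayleigh-type computation underlying electrical networks) that if we ground the vertex $v$, then $L_v$ is invertible and $(L_v^{-1})_{ww}$ equals the effective resistance $r(v,w)$ — this is exactly the potential at $w$ when a unit current is injected at $w$ and extracted at $v$. Hence $R(v) = \sum_{w \neq v} r(v,w) = \sum_{w\neq v} (L_v^{-1})_{ww} = \operatorname{tr}(L_v^{-1}) = \sum_{\lambda \in \mathcal E(L_v)} \lambda^{-1}$, giving (iii). For (iv) the same scheme applies but with the asymmetric matrix $N = I - M$: here $N_v$ is invertible, and one checks that $(N_v^{-1})_{ww} = d(w)\, r(v,w)$ — the degree factor appearing because $N = D^{-1}L$ (with $D$ the degree-diagonal matrix), so $N_v = D_v^{-1} L_v$ and thus $N_v^{-1} = L_v^{-1} D_v$, whose $(w,w)$ entry is $(L_v^{-1})_{ww} d(w) = r(v,w) d(w)$. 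Summing the diagonal gives $R_\pi(v) = \sum_{w\neq v} d(w) r(v,w) = \operatorname{tr}(N_v^{-1}) = \sum_{\lambda \in \mathcal E(N_v)} \lambda^{-1}$. (Eigenvalues of $N_v$ are real and positive since $N_v$ is similar to the symmetric positive-definite $D_v^{-1/2} L_v D_v^{-1/2}$.)

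\emph{Identities (i) and (ii).} These follow by averaging the reduced identities over the choice of grounded vertex. For (i): $\sum_{v} R(v) = \sum_v \sum_{w} r(v,w) = 2K(G)$, so $K(G) = \tfrac12 \sum_v \operatorname{tr}(L_v^{-1})$; on the other hand, a known spectral identity states $\operatorname{tr}(L_v^{-1})$ is, up to the normalization, independent of $v$ in the sense that $\sum_v \operatorname{tr}(L_v^{-1}) = n \sum_{\lambda \in \mathcal E(L)\setminus\{0\}} \lambda^{-1}$ — this comes from expressing $r(v,w)$ via the pseudoinverse $L^{+}$ as $r(v,w) = L^{+}_{vv} + L^{+}_{ww} - 2L^{+}_{vw}$ and using $\sum_v L^{+}_{vw} = 0$, which yields $2K(G) = n\operatorname{tr}(L^{+}) = n\sum_{\lambda\neq 0}\lambda^{-1}$. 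Identity (ii) is the analogous computation with $N$ in place of $L$: one has $r(v,w) d(w) $-type entries governed by the pseudoinverse of $N$ (or of the symmetrized $D^{-1/2}LD^{-1/2}$ together with the stationary weights), and the row/column sums against the stationary distribution vanish, leading to $K_{\pi^2}(G) = \tfrac12 \sum_v \operatorname{tr}(N_v^{-1}) = 2m \sum_{\lambda \in \mathcal E(N)\setminus\{0\}} \lambda^{-1}$; the factor $2m = \sum_w d(w)$ replaces the factor $n$ because the natural averaging here is with respect to the stationary distribution $d(w)/2m$ rather than the uniform one.

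\emph{Main obstacle.} The routine parts are the trace-equals-sum-of-reciprocal-eigenvalues step and the resistance formula $r(v,w) = L^{+}_{vv}+L^{+}_{ww}-2L^{+}_{vw}$. The one genuinely delicate point is the non-symmetric case (ii) and (iv): $N$ is not symmetric, so I must be careful that its eigenvalues are real, that $N_v$ is genuinely invertible, and that the identity $(N_v^{-1})_{ww} = d(w) r(v,w)$ holds on the nose — the cleanest route is to conjugate by $D^{1/2}$ throughout, work with the symmetric normalized Laplacian $\mathcal L = D^{-1/2} L D^{-1/2}$ (which is similar to $N$ and has the same eigenvalues), and then translate the resulting symmetric identities back, tracking the degree weights. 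Getting the bookkeeping of these $D$-factors exactly right, so that $n$ becomes $2m$ in (ii), is where the real care is needed.
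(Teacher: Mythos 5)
Your route is genuinely different from the paper's. The paper works with determinants throughout: it uses the matrix--tree theorem ($\det L_v = \tau(G)$, $\det L_{vw} = \tau(G)\,r(v,w)$), reads off the relevant sums as coefficients of the characteristic polynomials of $L$, $N$, $L_v$, $N_v$, and finishes with Vieta's formulas; the passage from $L$ to $N$ is handled by row-scaling the determinants, which makes the weights $d(v)d(w)$ appear automatically in $\det N_{vw} = P\,d(v)d(w)\tau(G)r(v,w)$. You instead work with traces of (pseudo)inverses. For (iii) and (iv) your argument is correct and is really the same computation in different clothing: $(L_v^{-1})_{ww} = \det L_{vw}/\det L_v = r(v,w)$ is the cofactor formula, and your observation $N_v^{-1} = L_v^{-1}D_v$ cleanly gives $\operatorname{tr}(N_v^{-1}) = R_\pi(v)$. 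For (i) your method via $r(v,w) = L^+_{vv}+L^+_{ww}-2L^+_{vw}$ and $L^+\mathbf{1}=0$ is sound, but your bookkeeping is off by a factor of $2$ in both intermediate claims: $\sum_v \operatorname{tr}(L_v^{-1}) = \sum_v R(v) = 2K(G) = 2n\operatorname{tr}(L^+)$, not $n\operatorname{tr}(L^+)$; the two errors cancel and the stated conclusion $K(G)=n\sum_{\lambda\neq 0}\lambda^{-1}$ is right, but as written the chain is inconsistent.

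Part (ii), however, contains a genuine error, not just a slip. You write $K_{\pi^2}(G) = \tfrac12\sum_v \operatorname{tr}(N_v^{-1})$, but by your own identity (iv) the right-hand side equals $\tfrac12\sum_v R_\pi(v) = \tfrac12\sum_v\sum_w d(w)r(v,w) = K_\pi(G)$, which carries only one degree weight. This is not harmless: the paper explicitly remarks that $K_\pi(G)$ apparently \emph{cannot} be expressed as an eigenvalue sum, so an argument that lands on $K_\pi$ instead of $K_{\pi^2}$ cannot be patched by adjusting a constant. The correct averaging is the degree-weighted one, $\tfrac12\sum_v d(v)\operatorname{tr}(N_v^{-1}) = K_{\pi^2}(G)$, and to connect this to $\sum_{\lambda\in\mathcal{E}(N)\setminus\{0\}}\lambda^{-1} = \operatorname{tr}(\mathcal{L}^+)$ (with $\mathcal{L}=D^{-1/2}LD^{-1/2}$) you need the weighted resistance formula $d(v)d(w)r(v,w) = d(w)\mathcal{L}^+_{vv} + d(v)\mathcal{L}^+_{ww} - 2\sqrt{d(v)d(w)}\,\mathcal{L}^+_{vw}$ together with $\mathcal{L}^+D^{1/2}\mathbf{1}=0$, which gives $2K_{\pi^2}(G) = 4m\operatorname{tr}(\mathcal{L}^+)$. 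That identity is precisely the step you defer to ``where the real care is needed,'' so as it stands (ii) is asserted rather than proved. The paper's determinant route avoids this issue entirely, since dividing each row of $L$ by the corresponding degree multiplies $\det N_{vw}$ by exactly the factor $d(v)d(w)$ needed to produce $K_{\pi^2}$.
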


\begin{proof}
Let $L_{vw}$ be the matrix that is obtained from $L$ by removing the row and column associated to $v$ and $w$. The key tool of our proof is the fact that $\det L_v$ equals the number $\tau(G)$ of spanning trees of $G$, while $\det L_{vw}$ is the number of so-called thickets: spanning forests consisting of two components, one of which contains $v$, the other $w$ \cite[Proposition~14.1]{Biggs}. 
The effective resistance is the quotient of the two, see \cite[Chapter~17]{Biggs}:
$$r(v,w) = \frac{\det L_{vw}}{\det L_v} = \frac{\det L_{vw}}{\tau(G)}.$$
Summing over all $v,w$, we obtain
$$K(G) = \frac{1}{\tau(G)} \sum_{\{v,w\} \subseteq V(G)} \det L_{vw}.$$
The sum is (up to sign) the coefficient of $t^2$ in the characteristic polynomial $\det(t I - L)$ of $L$, while the coefficient of $t$ is well known to be (up to sign) $n \tau(G)$: the sum of all the determinants $\det L_v$, which are all equal to $\tau(G)$. In both instances, the sign merely depends on the parity of the number of vertices. Our first formula now follows immediately from Vieta's theorem.

For the second equation, note that $N$ results from $L$ by dividing each row by the degree of its corresponding vertex. It follows immediately from the properties of the determinant that
$$\det N_v = \det L_v \prod_{x \in V(G) \setminus \{v\}} d(x)^{-1} = P d(v) \tau(G),$$
where $P = \prod_{x \in V(G)} d(x)^{-1}$, and likewise
$$\det N_{vw} = P d(v)d(w) \det L_{vw} = Pd(v)d(w) \tau(G) r(v,w).$$
By the same argument as before, we obtain
$$\sum_{\lambda \in \mathcal{E}(N) \setminus \{0\}} \frac{1}{\lambda} = \frac{\sum_{\{v,w\} \subseteq V(G)} \det N_{vw}}{\sum_{v \in V(G)} N_v} = \frac{P\tau(G)\sum_{\{v,w\} \subseteq V(G)} d(v)d(w)r(v,w)}{P\tau(G)\sum_{v \in V(G)} d(v)} = \frac{K_{\pi^2}(G)}{2m},$$
and our second formula follows.

Next we notice that $\sum_{\lambda \in \mathcal{E}(L_v)} \frac{1}{\lambda}$ is the quotient of the linear and the constant coefficient of the characteristic polynomial of $L_v$, which in turn equals
$$\frac{\sum_{w \in V(G) \setminus \{v\}} \det L_{vw}}{\det L_v} = \frac{\tau(G) \sum_{w \in V(G)} r(v,w)}{\tau(G)} = R(v),$$
proving our third statement. The fourth follows analogously.
\end{proof}

\subsection{A characteristic polynomial}
While it seems that $K_{\pi}(G)$ cannot be expressed in terms of eigenvalues of a matrix, there is an alternative way to express $K(G)$ as well as its weighted analogues in terms of coefficients of a polynomial: define
$$P(u,v) = \det(uI + vD - L),$$
where $I$ is the identity matrix, $D$ the diagonal matrix whose entries are the degrees of $G$, and $L$ the Laplacian matrix of $G$. Note that $P(0,0) = 0$, $P(u,0)$ is the characteristic polynomial of $L$, and $P(0,v)$ is a constant multiple of the characteristic polynomial of $N$. Moreover, we have the following relations (which are obtained in the same way as Theorem~\ref{thm:eigen}):
$$[u]P(u,v) = (-1)^{n-1} n\tau(G),\qquad [v]P(u,v) = 2(-1)^{n-1}m\tau(G),\qquad [u^2]P(u,v) = (-1)^n \tau(G)K(G),$$
$$[uv]P(u,v) = 2(-1)^n \tau(G)K_{\pi}(G),\qquad [v^2]P(u,v) = (-1)^n \tau(G)K_{\pi^2}(G).$$

\section{Trees} \label{secTrees}
In the case of trees, the effective resistance between two vertices equals their distance. This and some other special properties of trees cause the formulas in Theorem~\ref{thMain} to simplify greatly. Specifically, for any tree $T$, we have $K(T) = W(T)$ (i.e., the Kirchhoff index equals the Wiener index) as well as
$$K_{\pi}(T) = \frac12 \sum_{x \in V(T)} \sum_{y \in V(T)} d(y)d(x,y) = \frac14 \sum_{x \in V(T)} \sum_{y \in V(T)} (d(x)+d(y))d(x,y) = \frac12 \operatorname{Sch}(T)$$
and
$$K_{\pi^2}(T) = \frac12 \sum_{x \in V(T)} \sum_{y \in V(T)} d(x)d(y)d(x,y) = \operatorname{Gut}(T).$$
The quantities $\operatorname{Sch}(T)$ and $\operatorname{Gut}(T)$ in the two equations above are known as the \defi{Schultz index} and the \defi{Gutman index} respectively. It is known that for a tree $T$ of order $n$, one has (cf. \cite{Gutman})
$$\operatorname{Sch}(T)  = 4W(T) - n(n-1)\qquad \text{and}\qquad \operatorname{Gut}(T) = 4W(T) - (n-1)(2n-1).$$
Both identities can be proven along the same lines as the following lemma:
\begin{lemma}\label{treelemma}
For any vertex $x$ of a tree $T$, we have
\labtequc{Dpi}{$\sum_{y \in V(T)} d(y)d(x,y) = D_\pi(x) = 2 D(x) - m = 2\sum_{y \in V(T)} d(x,y) - m$.}
\end{lemma}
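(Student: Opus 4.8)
The key identity to establish is $\sum_{y \in V(T)} d(y)\, d(x,y) = 2\sum_{y \in V(T)} d(x,y) - m$, i.e. $D_\pi(x) = 2D(x) - m$ for every vertex $x$ of a tree $T$ with $m$ edges. The plan is to exploit the bijective correspondence in a tree between distances and edges: for any pair of vertices $u,w$, the distance $d(u,w)$ counts exactly the edges lying on the unique $u$--$w$ path. So I would rewrite $D(x) = \sum_{y} d(x,y)$ as a double sum over edges, counting for each edge $e$ the number of vertices $y$ whose path to $x$ uses $e$.

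The cleanest route is to swap the order of summation in $D_\pi(x) = \sum_{y} d(y)\, d(x,y)$. Since $d(y)$ is the number of edges incident with $y$, we have $D_\pi(x) = \sum_{y} \sum_{e \ni y} d(x,y) = \sum_{e = uv \in E(T)} \big(d(x,u) + d(x,v)\big)$, where the last equality collects, for each edge $e=uv$, the contributions from its two endpoints. Now for a fixed edge $e = uv$, removing $e$ splits $T$ into two components; say $u$ lies in the component containing $x$ (or handle the symmetric case). Then the path from $x$ to $v$ traverses $e$, so $d(x,v) = d(x,u) + 1$, giving $d(x,u) + d(x,v) = 2d(x,u) + 1$. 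Summing over all $m$ edges, the ``$+1$'' terms contribute $m$, and I claim $\sum_{e=uv} d(x,u)$ — where $u$ is always chosen as the endpoint on the same side as $x$ — equals $D(x) = \sum_{y} d(x,y)$. This last claim follows because the $d(x,u)$ edges on the $x$--$u$ path are in bijection with... actually more directly: reindex by noting that for each vertex $y \ne x$, the edge $e_y$ incident to $y$ on the $x$--$y$ path is the unique edge whose ``far side'' (relative to $x$) contains $y$, and summing $d(x, \cdot)$ over the near endpoints telescopes appropriately. So one obtains $D_\pi(x) = 2D(x) - m$ after being careful with which term is the ``$+1$''; since $d(x,u)+d(x,v) = d(x,u)+d(x,v)$ and one of them exceeds the other by exactly $1$, we get $\min\{d(x,u),d(x,v)\}$ summed twice plus $m$, and $\sum_e \min\{d(x,u),d(x,v)\} = D(x)$ by the bijection between non-root vertices and edges.

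Alternatively — and this may be the slicker writeup — root $T$ at $x$. Each edge $e$ then has a well-defined lower endpoint $e^-$ (farther from $x$) and upper endpoint $e^+$, with $d(x,e^-) = d(x,e^+)+1$. The map $y \mapsto$ (the edge joining $y$ to its parent) is a bijection from $V(T)\setminus\{x\}$ to $E(T)$, and under it $d(x,y) = d(x,e^-)$ for the corresponding edge. Hence $D(x) = \sum_{y \ne x} d(x,y) = \sum_{e \in E(T)} d(x,e^-)$. Plugging into $D_\pi(x) = \sum_{e=uv}(d(x,u)+d(x,v)) = \sum_e (d(x,e^-) + d(x,e^+)) = \sum_e (2d(x,e^-) - 1) = 2D(x) - m$ finishes it. The only genuinely delicate point — the ``main obstacle,'' though it is minor — is bookkeeping the orientation of each edge correctly and verifying the bijection between $V(T)\setminus\{x\}$ and $E(T)$; once the tree is rooted at $x$ this is immediate. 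The claimed identities for $\operatorname{Sch}(T)$ and $\operatorname{Gut}(T)$ then follow by the same edge-swapping technique applied to $\sum_{x,y}(d(x)+d(y))d(x,y)$ and $\sum_{x,y} d(x)d(y)d(x,y)$ respectively.
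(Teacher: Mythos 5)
Your final (``slicker'') writeup is correct, and it takes a genuinely different route from the paper's. Both arguments are double counts over vertex--edge incidences, but they group the terms differently. You expand the \emph{degree} factor, writing $D_\pi(x)=\sum_y\sum_{f\ni y}d(x,y)=\sum_{f=uv}\bigl(d(x,u)+d(x,v)\bigr)$, and then finish with the parent-edge bijection in the tree rooted at $x$ together with $d(x,e^-)=d(x,e^+)+1$; no handshake lemma is needed. The paper instead expands the \emph{distance} factor: it writes $d(x,y)$ as the number of edges on the $x$--$y$ path, so that the contribution of an edge $e$ to $D_\pi(x)$ is $\sum_{w\in B_x(e)}d(w)$, and evaluates this by the handshake lemma applied to the subtree $B_x(e)$, obtaining $2|B_x(e)|-1$ per edge (the paper's ``$2|B_x(e)|+1$'' is a typo). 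The paper's grouping transfers verbatim to the Schultz and Gutman identities stated just above the lemma (replace the weight $d(w)$ by $d(x)+d(w)$ or $d(x)d(w)$ and apply the handshake lemma to $B_x(e)$ again); your grouping also extends, as you note, but requires an additional summation over the outer vertex. One caveat about your first, discarded sketch: the claim $\sum_{e=uv}\min\{d(x,u),d(x,v)\}=D(x)$ is off by $m$ --- the correct value is $D(x)-m$, since the bijection $y\mapsto e_y$ matches $y$ with the \emph{far} endpoint of its parent edge, not the near one, which is exactly why that version would yield $2D(x)+m$. Your rooted-tree version handles this correctly, so nothing is lost.
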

\begin{proof}
To show \eqref{Dpi}, we will check that any edge $e$ has the same \defi{contribution} to the two sides of the equation, where we think of the contribution of $e$ as the number of times we add a term $d(x,y)$ \st\ $e$ lies on the \pth{x}{y} (and thus contributes one unit to the distance). To this end, let $A_x(e)$ be the set of vertices on the same side of $e$ as $x$ and $B_x(e) = V(T) \setminus A_x(e)$ the complement. Then the contribution of $e$ to $D_\pi(x) := \sum_w d(w) d(x,w)$ is, by definition, $\sum_{w\in B_x(e)} d(w)$.  By the handshake lemma, the latter sum equals $2|B_x(e)|-1$ (the $-1$ is due to the endvertex of $e$ in $B_x(e)$). Similarly, the contribution of $e$ to $D(x)$ is $|B_x(e)|$, from which \eqref{Dpi} easily follows.
\end{proof}

The equivalence of \ref{Bi} and \ref{Bii} in Theorem~\ref{thBev} is now immediate. The fact that \ref{Biii} is also equivalent to these two follows from the following lemma, whose proof is  
straightforward using \eqref{tet} in combination with Lemma~\ref{treelemma}:

\begin{lemma} \label{hit}
Let $x$ and $y$ be two vertices of a tree $T$ with $m$ edges. The hitting time $H_{xy}$ can be expressed as
$$H_{xy} = md(x,y) + D(y) - D(x).$$\qed
\end{lemma}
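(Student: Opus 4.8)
The plan is to derive this directly from Tetali's formula \eqref{tet} together with Lemma~\ref{treelemma}. First I would record the one special feature of trees that is needed: since there is a unique path between any two vertices, and resistances in series add up, the effective resistance equals the graph distance, $r(x,y) = d(x,y)$ (this is already noted at the start of Section~\ref{secTrees}). Consequently the weighted resistance-centrality collapses to the weighted centrality, $R_\pi(v) = \sum_{w \in V(T)} d(w)\, r(v,w) = \sum_{w \in V(T)} d(w)\, d(v,w) = D_\pi(v)$.

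Next I would substitute both of these into \eqref{tet}, which reads $H_{xy} = m\, r(x,y) + \tfrac12\big(R_\pi(y) - R_\pi(x)\big)$, to obtain $H_{xy} = m\, d(x,y) + \tfrac12\big(D_\pi(y) - D_\pi(x)\big)$. Then I invoke Lemma~\ref{treelemma}, which gives $D_\pi(v) = 2D(v) - m$; plugging this in, the two copies of $m$ cancel in the difference, so $\tfrac12\big(D_\pi(y) - D_\pi(x)\big) = D(y) - D(x)$, and the claimed identity $H_{xy} = m\, d(x,y) + D(y) - D(x)$ follows.

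There is essentially no obstacle here: the only step needing a word of justification is the identification $r(x,y) = d(x,y)$, which is immediate from the series law, and everything after that is a single substitution. In that sense the lemma is genuinely a corollary of the two results it cites. I would additionally point out that the same two ingredients prove Theorems~\ref{th1} and~\ref{RCW}: summing $H_{vw} = m\, d(v,w) + D(w) - D(v)$ over $w \in V(T)$ and using $m = n-1$, $W(T) = \tfrac12\sum_{w} D(w)$, and $\sum_w d(v,w) = D(v)$ yields $CC(v) + D(v) = 2W(T)$, while a parallel computation with the roles of the indices exchanged, combined with \eqref{com}, gives the $RC$ statement.
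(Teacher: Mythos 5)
Your proof is correct and is exactly the argument the paper intends: it declares the lemma "straightforward using \eqref{tet} in combination with Lemma~\ref{treelemma}", i.e.\ substitute $r(x,y)=d(x,y)$ and $R_\pi=D_\pi$ into Tetali's formula and then apply $D_\pi(v)=2D(v)-m$ so the constants cancel in the difference. Nothing further is needed.
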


\comment{
	\begin{align*}
H_{xy} &= \frac12 \sum_{w \in V(T)} d(w)(d(x,y) + d(w,y) - d(w,x)) \\
&= \frac{d(x,y)}{2} \sum_{w \in V(T)} d(w) + \frac12 \left( D_{\pi}(y) - D_{\pi}(x) \right) = m d(x,y) + D(y) - D(x),
\end{align*}
	which proves Theorem~\ref{hit}.
}
Let us now prove Theorem~\ref{th1} in its form~\eqref{CCW}. This can either be achieved by summing Lemma~\ref{hit} over all $y$, which yields
\begin{align*}
CC(x) &= \sum_{y \in V(T)} H_{xy} = m \sum_{y \in V(T)} d(x,y) + \sum_{y \in V(T)} D(y) - n D(x) \\
&= mD(x) + 2W(T)  - nD(x) = 2W(T) - D(x),
\end{align*}
or by specialisation in Theorem~\ref{thMain}:
\begin{align*}
CC(x) &= mR(x) - \frac{n}{2} R_{\pi}(x) + K_{\pi}(T) = mD(x) - \frac{n}{2} D_{\pi}(x) + \frac12 \operatorname{Sch}(T) \\
&= mD(x) - \frac{n}{2} (2D(x)-m) + 2W(T) - \frac{n(n-1)}{2} \\
&= (m-n)D(x) + 2W(T) +  \frac{n(n-1)}{2} - \frac{n(n-1)}{2} = 2W(T) - D(x).
\end{align*}
Analogously, we get
$$RC(x) = (2n-1)D(x) - 2W(T),$$
and Theorem~\ref{RCW} follows immediately. Finally, by Theorem~\ref{thMain} and
Lemma~4.1, we have
\begin{align*}
RC_{\pi}(x) &= 2mR_{\pi}(x) - K_{\pi^2}(T) = 2mD_{\pi}(x) - \operatorname{Gut}(T) \\
&= 2m (2D(x) - m) - (4W(T) - (n-1)(2n-1)) \\
&= 4mD(x) - 2m^2 - 4W(T) + m(2m+1) = 4mD(x) + m - 4W(T),
\end{align*}
which completes the proof of Theorem~\ref{thBev} by showing that \ref{Biv},\ref{Bv} and \ref{Bvi} are indeed equivalent to \ref{Bi}.

\comment{
	We write
$$D_x(G) = \sum_{y \in V(G)} d(x,y)$$
for the total distance from a vertex $x$, and
$$W(G) = \sum_{\{x,y\} \subseteq V(G)} d(x,y) = \frac12 \sum_{x \in V(G)} \sum_{y \in V(G)} d(x,y) = \frac12 \sum_{x \in V(G)} D_x(G).$$
for the Wiener index. We obtain the following somewhat surprising fact, showing that cover cost can be expressed by formula involving no probabilistic concepts.

\begin{corollary}\label{cor:trees}
Let $T$ be a tree. The cover cost of a vertex $x \in V(T)$ is
$$CC(x) = 2W(T) - D(x).$$
Equivalently,
$$CC(x) + D(x) = \sum_{y \in V(T)} (H_{xy} + d(x,y)) = 2W(T).$$
	\end{corollary}

Let us prove, as a warm-up, that the first two inequalities of \Tr{thBev} are equivalent, i.e.\ that $D(x) \leq D(y)$ \iff\ $D_\pi(x) \leq D_\pi(y)$. We claim that
\labtequc{Dpi}{$D_\pi(x) = 2 D(x) + m$,}
from which our assertion immediately follows. 

A similar argument will be used in the proof of \Tr{th1}:

\begin{proof}[Proof of \Tr{th1}]
We have to prove that $$CC(x) = 2W(T) - D(x).$$
For a tree we have $m = n-1$. 
Moreover, $r(x,y) = d(x,y)$, $R(y) = D(y)$ and $K(T) = W(T)$. Using this, we can rewrite \eqref{cctet} as
$$CC(x) = (n-1)D(x) + \frac12 \sum_{w \in V(T)} d(w) (D(w) - n d(x,w)).$$
We now check that any single edge $e$ has the same contribution to this sum and to $2W(T) - D(x)$, which implies our assertion. 

To this end, let again $A_x(e)$ be the set of vertices on the same side of $e$ as $x$ and $B_x(e) = V(T) \setminus A_x(e)$ the complement. Then the contribution of $e$ to the above sum is
$$(n-1)|B_x(e)| + \frac12 \sum_{w \in A_x(e)} d(w) |B_x(e)| + \frac12 \sum_{w \in B_x(e)} d(w) (|A_x(e)| - n).$$
Now note that $\sum_{w \in A_x(e)} d(w) = 2|A_x(e)| - 1$ and $\sum_{w \in B_x(e)} d(w) = 2|B_x(e)| - 1$ by the handshake lemma, hence the total contribution is
$$(n-1)|B_x(e)| + \frac12 (2|A_x(e)| - 1)|B_x(e)| + \frac12 (2|B_x(e)| - 1)(|A_x(e)|-n),$$
which simplifies to $(2|A_x(e)|-1)|B_x(e)|$. 

It is also easy to see that
$$W(T) = \sum_e |A_x(e)||B_x(e)|$$
and
$$D(x) = \sum_e |B_x(e)|,$$
from which we conclude that $CC(x) = 2W(T) - D(x)$.
\end{proof}

By \eqref{com}, the reverse cover cost $RC(x) = \sum_{y \in V(G)} H_{yx}$ is related to the cover cost by
\labtequc{RC}{$RC(x) = 2mR(x) - CC(x).$}

Combined with \Tr{th1}, this implies that
$$RC(x) = (2n-1)D(x) - 2W(T),$$
from which \Tr{RCW} immediately follows.
}

\begin{rem}
From Theorem~\ref{RCW} we also see that
$$RC(x) - RC(y) = (2n-1)(CC(y)-CC(x))$$
for any two vertices $x$ and $y$ in a tree, i.e., differences in the reverse cover cost are $2n-1$ times greater than differences in the cover cost.
\end{rem}

\subsection{Random trees} \label{ranT}
Recall that the cover cost was introduced in \cite{cc} as a tractable variant of the cover time. It was shown by Aldous \cite{Aldous} that the cover time of the random walk starting at the root of a uniformly random rooted labelled tree on $n$ vertices is on average of order $n^{3/2}$. Using Theorems~\ref{th1} and~\ref{RCW}, it is easy to obtain analogous and even more precise results for the cover cost and reverse cover cost, building on results of Janson~\cite{Janson} who proved that, for a very general class of trees (simply generated trees or equivalently Galton-Watson trees), the Wiener index and the centrality are of average order $n^{5/2}$ and $n^{3/2}$ respectively. More precisely, if $T_n$ is a random rooted tree from a simply generated class whose root is $r$, and the random variables $W_n$ and $D_n$ are defined by $W_n = W(T_n)$ and $D_n = D(r)$ respectively, then for a certain constant $\alpha$ depending on the specific family of trees (amongst others, this covers the family of labelled trees, the family of binary trees, or the family of plane trees),
$$\big(\alpha n^{-3/2} D_n, \alpha n^{-5/2} W_n\big) \overset{d}{\to} (\xi,\zeta),$$
where the random variables $\xi$ and $\zeta$ can be defined in terms of a normalised Brownian motion $e(t)$, $0 \leq t \leq 1$:
$$\xi = 2\int_0^1 e(t)\,dt \qquad \text{and} \qquad \zeta = \xi - 4 \iint_{0 < s < t < 1} \min_{s \leq u \leq t} e(u) \,ds\,dt.$$
Thus, by \eqref{RCD}, 
if $C_n = CC(r)$ and $R_n = RC(r)$, then
$$\big(\alpha n^{-5/2} C_n, \alpha n^{-5/2} R_n\big) \overset{d}{\to} (2\zeta,2\xi-2\zeta).$$

Moreover, the expectations of $W_n$ and $D_n$ are of order $n^{5/2}$ and $n^{3/2}$ respectively (\cite{EMMS}; see also Janson \cite[Theorem 3.4]{Janson}), from which we deduce, using~\eqref{CCW}, that the expectation of $C_n$ is of order $n^{5/2}$.

More precisely, using \cite[Theorem 3.4]{Janson} we obtain that $\Ex C_n$ is asymptotic to $\sqrt{\pi/2} n^{3/2}$, where the expectation is with respect to the uniformly random rooted labelled tree $T_n$ on $n$ vertices. It is interesting to compare this with a conjecture of \cite[Conjecture~14]{Aldous}, according to which the expected cover and return time $C^+$ of $T_n$ from its root is asymptotic to $6\sqrt{2\pi} n^{3/2}$. Note that the expected cover and return time of any rooted graph is greater than $CC(r) +RC(r)$ \cite{cc}. Moreover, for $T_n$ we have $\Ex CC(r) = \Ex RC(r)$ by linearity of expectation. Putting these facts together, we obtain a lower bound for $\Ex C^+(T_n)$ that is weaker than Aldous' conjecture by a factor of 6:
\begin{corollary}
Let $T_n$ be the uniformly random rooted labelled tree on $n$ vertices. Then $$\Ex C^+(T_n) \gtrsim \sqrt{2\pi} n^{3/2}.$$ 
\end{corollary}
(Here, we write $f(n) \gtrsim g(n)$ if $\liminf f(n)/g(n)\geq 1$.)

\medskip
The above discussion motivates the following question:
\begin{problem}
Ler $r$ be a uniformly chosen random vertex of a random  graph $G$, and let $CT(r)$ denote the cover time from $r$ in $G$. Is it true that the expectations of $\frac{CC(r)}{|V(G)|}$ and $CT(r)$ are of the same asymptotic order?
\end{problem}
Here, we choose \g according to the Erd\H os-Renyi model \cite{ErdosRenyi}, but other random graph distributions can be considered. Note that the cover time and cover cost are by definition not random parameters once $G$ and $r$ are fixed, but expectations; the  randomness  in the problem is introduced by the choice of \g alone, not the behaviour of the \rw.

\subsection{The extremal trees} \label{sec:ext}

In this section we determine the extremal values of hitting time, cover cost and reverse cover cost for trees of given order, making use of the formulas in Theorem~\ref{th1} and Lemma~\ref{hit}.

In view of Lemma~\ref{hit}, hitting times in a tree are always integers, and they trivially satisfy $H_{xy} \geq 1$, with equality if and only if $x$ is a leaf and $y$ its neighbour. The maximum, on the other hand, is (unsurprisingly) obtained for the two ends of a path -- see Corollary~\ref{minmax} below. This is a consequence of the following simple inequality:

\begin{theo}\label{hitbound}
For any two vertices $x$ and $y$ in a tree $T$, we have
$$d(x,y)^2 \leq H_{xy} \leq d(x,y)(2m-d(x,y)).$$
The lower bound holds with equality if and only if, for all vertices $w \in V(T)$, either $w$ lies on the path from $x$ to $y$ or $y$ lies on the path from $w$ to $x$. The upper bound holds with equality if and only if, for all vertices $w \in V(T)$, either $w$ lies on the path from $x$ to $y$ or $x$ lies on the path from $w$ to $y$.
\end{theo}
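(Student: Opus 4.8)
The plan is first to reduce the upper bound to the lower bound via the commute-time identity, and then to prove the lower bound by expressing $H_{xy}$ through the edges on the $x$--$y$ path.

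Since $r(x,y)=d(x,y)$ in a tree, \eqref{com} gives $H_{xy}+H_{yx}=2m\,d(x,y)$. Hence the inequality $H_{xy}\le d(x,y)(2m-d(x,y))$ is equivalent to $H_{yx}\ge d(y,x)^2$, and the upper bound holds with equality for the ordered pair $(x,y)$ precisely when the lower bound holds with equality for $(y,x)$. As the $x$--$y$ path and the $y$--$x$ path coincide, the asserted equality condition for the upper bound is exactly the asserted equality condition for the lower bound with the roles of $x$ and $y$ swapped. So it suffices to establish $H_{xy}\ge d(x,y)^2$ together with its equality characterisation. (Alternatively one could prove the upper bound directly from the computation below using the inequalities $a_i\le n-\ell+i-1$, but the commute-time reduction is cleaner.)

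To handle the lower bound I would compute $H_{xy}$ exactly. Write the $x$--$y$ path as $x=v_0v_1\cdots v_\ell=y$ with $\ell=d(x,y)$, set $e_i=v_{i-1}v_i$, and let $a_i$ be the number of vertices on the same side of $e_i$ as $x$, so that $b_i:=n-a_i$ vertices lie on the $y$-side of $e_i$. As noted in the proof of \Lr{treelemma}, $D(v)=\sum_e|B_v(e)|$ where $B_v(e)$ is the side of $e$ not containing $v$; every edge off the $x$--$y$ path has the same two sides relative to $x$ and to $y$ and thus cancels in $D(y)-D(x)$, while for $e_i$ the $y$-side has $b_i$ vertices and the $x$-side has $a_i$ vertices, so $D(y)-D(x)=\sum_{i=1}^{\ell}(a_i-b_i)=\sum_{i=1}^{\ell}(2a_i-n)$. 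Plugging this into \Lr{hit} and using $m=n-1$ gives
$$H_{xy}=(n-1)\ell+\sum_{i=1}^{\ell}(2a_i-n)=2\sum_{i=1}^{\ell}a_i-\ell.$$
For $2\le i\le\ell$ the $x$-side of $e_i$ strictly contains the $x$-side of $e_{i-1}$ (it gains at least the vertex $v_{i-1}$); moreover the $x$-side of $e_1$ contains $x$ and the $x$-side of $e_\ell$ misses $y$. Hence $1\le a_1<a_2<\cdots<a_\ell\le n-1$ are integers, so $a_i\ge i$ and $\sum_{i=1}^{\ell}a_i\ge\binom{\ell+1}{2}$, whence $H_{xy}\ge\ell(\ell+1)-\ell=\ell^2=d(x,y)^2$.

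It remains to unwind the equality case. Equality forces $a_i=i$ for every $i$, and since the $a_i$ form a strictly increasing integer sequence with $a_1\ge1$, this is equivalent to the single condition $a_\ell=\ell$. The $\ell$ path-vertices $v_0,\dots,v_{\ell-1}$ always lie on the $x$-side of $e_\ell$, so $a_\ell=\ell$ says exactly that these are the only vertices on that side, i.e.\ every vertex $w$ not on the $x$--$y$ path lies on the $y$-side of $e_\ell$, which means the $w$--$x$ path runs through $v_\ell=y$. Since $y$ itself trivially has $y$ on its path to $x$, this is precisely the stated condition: for every $w\in V(T)$, either $w$ lies on the $x$--$y$ path or $y$ lies on the $w$--$x$ path. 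The corresponding characterisation for the upper bound then follows from the first paragraph by interchanging $x$ and $y$. The two places that need genuine care are the edge-by-edge bookkeeping in the computation of $D(y)-D(x)$ and the observation that $a_\ell=\ell$ already forces all the equalities $a_i=i$; the remainder is routine arithmetic.
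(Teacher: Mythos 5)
Your proof is correct, and for the lower bound it takes a genuinely different route from the paper. The paper works directly from Tetali's formula \eqref{tet}: it splits the sum over $w$ into path vertices and non-path vertices, discards the (nonnegative, by the triangle inequality) contributions of the latter, and evaluates the former as $\sum_{w\in\mathcal P}d(w)d(w,y)\ge d(x,y)+2\sum_{j=1}^{d(x,y)-1}j$ using $d(x)\ge 1$ and $d(w)\ge 2$ for internal path vertices; equality is then traced back to the triangle inequality being tight for every $w\notin\mathcal P$. You instead start from the exact identity of \Lr{hit} and decompose $D(y)-D(x)$ edge by edge as in \Lr{treelemma}, arriving at the clean closed form $H_{xy}=2\sum_{i=1}^{\ell}a_i-\ell$ with $a_1<\dots<a_\ell$ strictly increasing integers; the bound $a_i\ge i$ and the reduction of the equality case to the single condition $a_\ell=\ell$ then do all the work. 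Your version buys a tidier equality analysis: the paper's one-line equality claim silently uses the fact that once all non-path vertices hang off $y$, the degree inequalities $d(x)\ge1$, $d(w)\ge 2$ are automatically tight, whereas in your setup there is only one inequality to saturate. Both proofs handle the upper bound identically via the commute-time identity \eqref{com}, though you spell out the transfer of the equality condition under swapping $x$ and $y$, which the paper leaves implicit.
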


\begin{proof}
We use formula~\eqref{tet} for the hitting time. By the triangle inequality, we have 
\begin{equation}\label{tri-ineq}
r(x,y) + r(w,y)-r(w,x) = d(x,y)+d(w,y)-d(w,x) \geq 0.
\end{equation}
Moreover, for vertices $w$ that lie on the path $\mathcal{P}$ from $x$ to $y$, we have $d(w,x) = d(x,y) - d(w,y)$ and thus
$$r(x,y) + r(w,y)-r(w,x) = d(x,y)+d(w,y)-d(w,x) = 2d(w,y).$$
It follows that
$$H_{xy} \geq \sum_{w \in \mathcal{P}} d(w)d(w,y).$$
Moreover, $d(w) \geq 2$ for all $w \in \mathcal{P} \setminus \{x,y\}$, and $d(x) \geq 1$, so
$$H_{xy} \geq d(x,y) + 2 \sum_{j=1}^{d(x,y)-1} j = d(x,y)^2,$$
and equality holds if and only if, except for the vertices on $\mathcal{P}$, \eqref{tri-ineq} holds with equality, i.e., for all $w \notin \mathcal{P}$, $y$ lies on the path from $w$ to $x$. This completes the proof of the lower bound, the upper bound immediately follows from~\eqref{com}.
\end{proof}

\begin{cor}\label{minmax}
For any two vertices $x$ and $y$ in a tree $T$ with $m$ edges, we have
$$1 \leq H_{xy} \leq m^2.$$
The lower bound holds with equality if and only if $x$ is a leaf and $y$ its neighbour. The upper bound holds with equality if and only if $T$ is a path and $x$ and $y$ its endpoints.
\end{cor}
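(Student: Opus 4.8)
The plan is to derive everything from Theorem~\ref{hitbound}, which already traps $H_{xy}$ between $d(x,y)^2$ and $d(x,y)(2m-d(x,y))$ together with a description of the two equality cases. Throughout one assumes $x\neq y$, so $d(x,y)\geq 1$ (for $x=y$ the statement is vacuous, since $H_{xy}=0$). The whole corollary is then a specialisation, obtained by optimising the two bounds of Theorem~\ref{hitbound} over the possible values of $d:=d(x,y)$.

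For the lower bound I would simply chain $H_{xy}\geq d(x,y)^2\geq 1$. Equality $H_{xy}=1$ forces $d(x,y)=1$, so $x$ and $y$ are adjacent, and simultaneously equality in the lower bound of Theorem~\ref{hitbound}. When $d(x,y)=1$ the $x$--$y$ path is the single edge $xy$, so the equality condition ``for every vertex $w$, either $w$ lies on the $x$--$y$ path or $y$ lies on the path from $w$ to $x$'' reduces to: every $w\notin\{x,y\}$ has $y$ on its path to $x$. This says exactly that $x$ has no neighbour other than $y$, i.e. $x$ is a leaf with unique neighbour $y$. The converse is immediate, since from a leaf $x$ the random walk steps deterministically to $y$, giving $H_{xy}=1$.

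For the upper bound I would first record the elementary identity $m^2-d(2m-d)=(m-d)^2$, so that $d(x,y)(2m-d(x,y))\leq m^2$ with equality iff $d(x,y)=m$; combined with Theorem~\ref{hitbound} this yields $H_{xy}\leq m^2$. For the equality analysis, note that a tree with $m$ edges has $m+1$ vertices, hence the $x$--$y$ path has at most $m$ edges, and $d(x,y)=m$ holds iff that path uses all $m+1$ vertices, i.e. iff $T$ is a path with endpoints $x$ and $y$. If $d(x,y)<m$ then $H_{xy}\leq d(x,y)(2m-d(x,y))<m^2$. If $d(x,y)=m$ then every vertex lies on the $x$--$y$ path, so the equality condition in the upper bound of Theorem~\ref{hitbound} holds automatically and $H_{xy}=d(x,y)(2m-d(x,y))=m^2$. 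Hence $H_{xy}=m^2$ precisely when $T$ is a path and $x,y$ are its endpoints.

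I do not expect any genuine obstacle: the corollary follows directly from Theorem~\ref{hitbound}. The only point requiring mild care is the equality discussion — observing that the structural conditions in Theorem~\ref{hitbound} become automatic in the two extremal regimes (namely $d(x,y)=1$ with $x$ a leaf, and $d(x,y)=m$ with all vertices on the $x$--$y$ path), and translating ``the $x$--$y$ path spans $T$'' into ``$T$ is a path with endpoints $x$ and $y$''.
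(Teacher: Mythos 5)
Your proposal is correct and follows exactly the route the paper intends: the corollary is stated as an immediate consequence of Theorem~\ref{hitbound}, and you supply the (omitted) optimisation of the two bounds over $d(x,y)$ together with the correct specialisation of the equality conditions ($d(x,y)=1$ with $x$ a leaf, and $d(x,y)=m$ forcing $T$ to be the $x$--$y$ path, in which case the structural condition holds vacuously). No gaps.
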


Next we turn our attention to the cover cost. In the following two theorems, we determine its minimum and maximum respectively:

\begin{theo}\label{thm:minCC}
The minimum value of $CC(r)$ among all trees of order $n \geq 2$, rooted at a vertex $r$, is $2n^2-6n+5$, and it is only attained by a star, rooted at one of its leaves.
\end{theo}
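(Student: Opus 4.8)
\medskip

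The plan is to use the identity $CC(r) = 2W(T) - D(r)$ from Theorem~\ref{th1} (see \eqref{CCW}) and to reinterpret both terms as sums over the edges of $T$. For an edge $e$, deleting it splits $T$ into two components; let $b_e$ be the number of vertices in the component \emph{not} containing $r$, so the other has $n-b_e$ vertices. Since $d(x,y)$ counts the edges separating $x$ and $y$, double counting gives $W(T) = \sum_{e} b_e(n-b_e)$, and likewise $D(r) = \sum_e b_e$. Hence
\[
CC(r) = 2W(T) - D(r) = \sum_{e \in E(T)} \bigl( 2b_e(n-b_e) - b_e \bigr) = \sum_{e \in E(T)} g(b_e), \qquad g(b) := (2n-1)b - 2b^2 ,
\]
where each $b_e$ lies in $\{1, \dots, n-1\}$. (In fact $\{b_e\}_e$ is exactly the multiset of orders of the subtrees hanging below the non-root vertices of $T$.)

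It then remains to minimise $\sum_e g(b_e)$. The function $g$ is concave, with $g(1) = 2n-3$ and $g(n-1) = n-1$, so over the integers of $[1,n-1]$ it attains its minimum $n-1$ at $b=n-1$, while over the integers of $[1,n-2]$ it attains its minimum $g(1)=2n-3$ (one checks $g(n-2)=3n-6\ge 2n-3$ for $n\ge 3$). The point is that $g(n-1)=n-1<2n-3$, so one may not bound every term by $2n-3$; but at most one edge can have $b_e=n-1$, since such an edge would have $V(T)\setminus\{r\}$ as its far component, which determines it uniquely. Singling out one edge $e_0$ of maximum $b_{e_0}$, the remaining $n-2$ edges all satisfy $b_e\le n-2$, so
\[
CC(r) = g(b_{e_0}) + \sum_{e \ne e_0} g(b_e) \ \ge\ (n-1) + (n-2)(2n-3) = 2n^2 - 6n + 5 ,
\]
the case $n=2$ being trivial.

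For the equality case one needs $g(b_{e_0})=n-1$ and $g(b_e)=2n-3$ for all other edges; by concavity of $g$ these are the only integer solutions in the respective ranges, forcing $b_{e_0}=n-1$ and $b_e=1$ for every $e\ne e_0$. An edge has $b_e=1$ exactly when its far endpoint is a leaf, so every vertex other than $r$ and the far endpoint $c$ of $e_0$ is a leaf; since $c$'s subtree is all of $V(T)\setminus\{r\}$, this makes $c$ adjacent to $r$ and to each of the $n-2$ leaves, i.e.\ $T$ is the star $K_{1,n-1}$ rooted at a leaf, and a direct computation ($W(K_{1,n-1})=(n-1)^2$, $D(r)=2n-3$ for a leaf $r$) confirms the value $2n^2-6n+5$ is attained. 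The argument is short; the only point requiring care is that the minimum of $g$ on $[1,n-1]$ sits at the right endpoint $b=n-1$ rather than at $b=1$, which is exactly why the unique ``heavy'' edge must be separated off before bounding the rest.
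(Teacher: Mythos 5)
Your proof is correct, and it takes a genuinely different route from the paper's. The paper first fixes the tree and argues (via convexity of $D_T(\cdot)$ along paths) that the optimal root is a leaf, then peels off the root and invokes the known extremal result that the star minimises the Wiener index, together with $D_{T'}(r')\ge |T'|-1$. You instead expand $CC(r)=2W(T)-D(r)$ edge by edge as $\sum_e g(b_e)$ with $g(b)=(2n-1)b-2b^2$ --- which is exactly the edge-contribution formula $\sum_e (2|A_r(e)|-1)|B_r(e)|$ that the paper itself records and uses, but only in its proof of the \emph{maximum} of $CC(r)$ --- and then minimise termwise, correctly isolating the at most one edge with $b_e=n-1$ before bounding the remaining $n-2$ terms by $g(1)=2n-3$. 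Your key observations all check out: $g$ is strictly concave so its minimum over integer ranges sits at an endpoint, $g(n-1)=n-1<2n-3=g(1)$ while $g(n-2)=3n-6\ge 2n-3$ for $n\ge 3$, and the equality analysis correctly forces all far endpoints except one to be leaves, yielding the star rooted at a leaf. What your approach buys is self-containedness (no appeal to the extremal theory of the Wiener index) and a uniform treatment of the equality case; what the paper's approach buys is brevity by leaning on known results. Both are valid; yours would in fact unify the proofs of the minimum and maximum theorems under the same edge-sum formula.
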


\begin{proof}
In the following, we use the notation $D_T(r)$ instead of $D(r)$ to emphasize the dependence on the tree $T$. Given the tree $T$, it follows from \Tr{th1} that the minimum of $CC(r)$ is achieved when $D_T(r)$ attains its maximum. Since $D_T(x)$ (as a function of $x$) is convex along paths, this maximum can only be attained when $r$ is a leaf, so we can assume that the root is a leaf in our case. Let $T' = T \setminus r$ be the rest of $T$, and let $r'$ be the unique neighbour of $r$. Then we have
\begin{align*}
CC(r) & = 2W(T) - D_T(r) = 2(W(T') + D_T(r)) - D_T(r) \\
& = 2W(T') + D_T(r) = 2W(T') + |T'| + D_{T'}(r').
\end{align*}
It is well known \cite{dobrynin2001wiener,entringer1976distance} that the Wiener index is minimized by the star $S_n$, so $W(T') \geq W(S_{n-1}) = (n-2)^2$. Moreover, $D_{T'}(r') \geq |T'|-1$ is obvious as well, with equality if and only if $T'$ is a star and $r'$ its centre. It follows that
$$CC(r) \geq 2W(S_{n-1}) + (n-1) + (n-2) = 2(n-2)^2 + 2n-3 = 2n^2-6n+5$$
\fe\ tree $T$ of order $n \geq 2$, with equality if and only if $T$ is the star $S_n$ and $r$ one of its leaves.
\end{proof}

\begin{theo}
The maximum value of $CC(r)$ among all trees of order $n \geq 2$, rooted at a vertex $r$, is $(n^3-n)/3 - \lfloor n^2/4 \rfloor$, and it is only attained by a path, rooted at a midpoint.
\end{theo}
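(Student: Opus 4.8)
The plan is to combine \Tr{th1}, which gives $CC(r)=2W(T)-D(r)$, with an edge-counting estimate of the right-hand side. Since $D$ is convex along paths (as already used in the proof of \Tr{thm:minCC}), it attains its minimum at a centroid of $T$, and since decreasing $D(r)$ only increases $CC(r)$, it suffices to bound $CC(r)$ when $r$ is a centroid, i.e.\ when every branch of $T$ at $r$ has at most $\floor{n/2}$ vertices. Root $T$ at $r$, orient all edges away from $r$, and for $v\neq r$ let $s(v)$ be the number of vertices in the subtree below $v$; set $N_j:=|\{v\neq r:s(v)\geq j\}|$, so $N_1=n-1$. Using the standard edge decompositions $W(T)=\sum_e a_eb_e$ and $D(r)=\sum_{v\neq r}s(v)$ (where $a_e,b_e$ are the sizes of the two components of $T-e$), and writing $e(v)$ for the edge joining $v$ to its parent so that its two sides have sizes $s(v)$ and $n-s(v)$, one computes
$$CC(r)=\sum_{v\neq r}s(v)\bigl(2n-1-2s(v)\bigr)=\sum_{j\geq1}(2n+1-4j)\,N_j,$$
the second equality by expanding $s(2n-1-2s)=\sum_{j=1}^{s}(2n+1-4j)$. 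The coefficient $2n+1-4j$ is positive precisely for $j\leq\floor{n/2}$ and negative for $j>\floor{n/2}$.

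The crux is the inequality $N_j\leq\max(0,\,n+1-2j)$ for all $j\geq1$, valid when $r$ is a centroid. First note that in any tree on $N$ vertices rooted at a vertex $\rho$, the set of vertices whose subtree has size $\geq j$ is closed under taking ancestors, hence induces a subtree $S'$; if $j\geq2$, each leaf of $S'$ hangs a subtree containing at least $j-1$ vertices outside $S'$, and these are pairwise disjoint, so $|S'|\leq N-(j-1)$. Applying this inside each branch $B_i$ of $T$ at $r$, of size $n_i$ with $\sum_i n_i=n-1$, gives $N_j\leq\sum_{i:\,n_i\geq j}(n_i-j+1)$. If at least two branches contribute, this is $\leq(n-1)-2(j-1)=n+1-2j$; if only one branch contributes, it is $\leq n_1-(j-1)\leq\floor{n/2}-(j-1)\leq n+1-2j$ since $j\leq n/2$ — and this single-branch case is the only point where the centroid hypothesis is used. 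Feeding $N_j\leq n+1-2j$ into the displayed formula on the range $j\leq\floor{n/2}$, where the coefficient is positive and $n+1-2j\geq1$, and discarding the non-positive terms for $j>\floor{n/2}$, yields $CC(r)\leq\sum_{j=1}^{\floor{n/2}}(2n+1-4j)(n+1-2j)$. This last quantity equals $CC_{P_n}(c)$ for $c$ a central vertex of the path $P_n$: rooting $P_n$ at $c$ realises $N_j=\max(0,n+1-2j)$, and by \Tr{th1} with the elementary values $W(P_n)=\binom{n+1}{3}$ and $D_{P_n}(c)=\floor{n^2/4}$ we get $CC_{P_n}(c)=(n^3-n)/3-\floor{n^2/4}$. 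This establishes the claimed upper bound.

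For the extremal characterisation, equality forces $r$ to be a centroid and, taking $j=2$ (which lies in $[2,\floor{n/2}]$ once $n\geq4$), $N_2=n-3$. Since $N_2\leq(n-1)-d$ with $d$ the number of branches at $r$, and $d\geq2$ for a centroid as soon as $n\geq3$, we get $d=2$ and equality $N_2=(n_1-1)+(n_2-1)$; by the subtree estimate this forces each $B_i$ to have a single rooted leaf, i.e.\ to be a path rooted at an endpoint. Hence $T$ is a path and $r$ one of its central vertices, and the cases $n\in\{2,3\}$ are immediate since $P_n$ is then the only tree. I expect the main obstacle to be the subtree-size estimate together with the single-heavy-branch case — precisely the step that requires $r$ to be a centroid; once that is set up, what remains is bookkeeping with floors and a short arithmetic identity for the path.
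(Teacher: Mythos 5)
Your proof is correct, but it takes a genuinely different route from the paper's. Both arguments start from $CC(r)=2W(T)-D(r)$ and, implicitly or explicitly, from the edge decomposition $CC(r)=\sum_{v\neq r}s(v)(2n-1-2s(v))=\sum_e(2|A_r(e)|-1)|B_r(e)|$. The paper then argues by local transformations: it first straightens every branch at $r$ into a path (invoking the known extremality of the path for the Wiener index and for $D$), reducing to a subdivided star, and then repeatedly relocates one branch onto the end of another via an exchange computation on the edge contributions, until a path remains; the root position is optimised last. You instead fix the root at a minimiser of $D$ from the outset (legitimate, since for fixed $T$ maximising $CC(r)$ is minimising $D(r)$), rewrite $CC(r)$ as the positive combination $\sum_j(2n+1-4j)N_j$ of the level-set sizes of the subtree-size function, and prove the sharp pointwise bound $N_j\leq n+1-2j$ on the relevant range via ancestor-closedness of the level sets plus the property that every branch at such a root has at most $\lfloor n/2\rfloor$ vertices; the $j=2$ level set then yields the uniqueness of the extremal configuration. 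Your approach is more self-contained (no appeal to the extremal trees for $W(T)$, no iterated exchange whose bookkeeping is somewhat delicate) and it delivers the equality case more cleanly; the paper's approach is shorter on the page because it outsources work to known results. One small point to make explicit: the fact that a minimiser of $D$ has all branches of size at most $n/2$ deserves its one-line justification, namely that moving the root to a neighbour inside a branch $B$ changes $D$ by exactly $n-2|B|$.
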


\begin{proof}
Let $r_1,r_2,\ldots,r_k$ be the neighbours of $r$ and let $T_1,T_2,\ldots,T_k$ be the associated branches. Then we have
$$W(T) = \sum_{i=1}^k W(T_i) + \sum_{i=1}^k \sum_{\substack{j=1 \\ j \neq i}}^k (D_{T_i}(r_i) +|T_i|)|T_j| + D_T(r),$$
where the first term accounts for distances between vertices in the same branch, the second term for distances between vertices in different branches, and the last one for distances between the root and other vertices. Moreover,
$$D_T(r) = \sum_{i=1}^k D_{T_i}(r_i) + |T| - 1.$$
Therefore,
$$CC(r) = 2W(T) - D_T(r) = 2\sum_{i=1}^k W(T_i) + 2\sum_{i=1}^k \sum_{\substack{j=1 \\ j \neq i}}^k (D_{T_i}(r_i) +|T_i|)|T_j| + \sum_{i=1}^k D_{T_i}(r_i) + |T|-1.$$
It is known that the Wiener index is maximised by a path \cite{dobrynin2001wiener,entringer1976distance}, and it is also easy to see that $D(r)$ is maximal for a path of which $r$ is an end. Therefore, $CC(r)$ increases if we replace each of the branches $T_i$ by a path with the same number of vertices. This means that we can assume that our tree maximising $CC(r)$ is a subdivided star and $r$ its centre. 

Now assume that $k > 2$ and, \obda, that $|T_1|\leq |T_2|\leq |T_3|$. We claim that if we detach $T_2$ from $r$ and attach it to the last vertex of $T_1$, then $CC(r)$ will increase. To see this, we are going to use the formula
$$CC(r) =\sum_{e \in E(T)} (2|A_r(e)| - 1)|B_r(e)|,$$
which can be deduced from \Tr{th1} and a double-counting argument similar to the one we used in the proof of \Lr{treelemma}. Note that for any edge $e$ not on  $T_1$, the sizes of $A_r(e),B_r(e)$ are not affected by this modification. For an edge $e$ that does lie on $T_1$, its contribution to the sum above changes from $(2A-1)B$ to $(2(A-t)-1)(B+t)$, where $A:= |A_r(e)|, B:= |B_r(e)|$ (as defined for $T$ before the modification) and $t:= |T_2|$. The difference between the two expressions is $$(2(A-t)-1)(B+t) - (2A-1)B = 2At-2tB-2t^2-t= 2t(A-B)-2t(t+\tfrac12),$$
and this is strictly positive \iff\ $A-B>t+\frac12$. Clearly, we have $B \leq |T_1|$ and $A> |T_2|+|T_3|$, and so $A-B> |T_2|+|T_3|- |T_1|\geq|T_2|=t$, where we used our assumption about the sizes of the $T_i$. 
Since all values are integral, we thus obtain $A-B>t+\frac12$ as desired, proving that $CC(r)$ increases when $T_2$ is moved to the end of $T_1$. 

By iterating the argument, we can assume that $T$ is a path.
The minimum of $D(r)$ is clearly attained at a midpoint of the path, and the precise value of $CC(r)$ is easily determined in this case, completing the proof.
\end{proof}

For the reverse cover cost, it is easier to determine the extremal values. We start with the lower bound, which follows immediately from the lower bound in Theorem~\ref{hitbound}.

\begin{theo}
The minimum value of $RC(r)$ among all trees of order $n \geq 2$, rooted at a vertex $r$, is $n-1$, and it is only attained by a star, rooted at its centre. \qed
\end{theo}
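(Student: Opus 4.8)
The plan is to obtain the statement immediately from the lower bound of Theorem~\ref{hitbound} together with the equality case recorded in Corollary~\ref{minmax}. First I would write $RC(r)=\sum_{y\in V(T)}H_{yr}=\sum_{y\neq r}H_{yr}$, using $H_{rr}=0$, so that $RC(r)$ is a sum of exactly $n-1$ hitting times. Each of these satisfies $H_{yr}\ge 1$ (indeed $H_{yr}\ge d(y,r)^2\ge 1$ by Theorem~\ref{hitbound}), and hence $RC(r)\ge n-1$ for every tree of order $n$ and every choice of root $r$.

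For the equality case I would argue that $RC(r)=n-1$ forces $H_{yr}=1$ for every $y\neq r$; by Corollary~\ref{minmax} this occurs precisely when $y$ is a leaf and $r$ is its neighbour, so every vertex other than $r$ must be a leaf adjacent to $r$. Since $T$ is connected, this means $T$ is the star $S_n$ and $r$ is its centre. Conversely, in the star rooted at its centre each non-root vertex is a leaf whose unique neighbour is $r$, so all $n-1$ terms equal $1$ and $RC(r)=n-1$; alternatively, one may simply substitute $D(r)=n-1$ and $W(S_n)=(n-1)^2$ into the identity $RC(r)=(2n-1)D(r)-2W(T)$ established earlier in this section, which again gives $RC(r)=n-1$.

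I do not expect any genuine obstacle here. The only point requiring slight care is that the equality discussion must invoke the sharp characterization from Corollary~\ref{minmax} rather than only the inequality $H_{yr}\ge d(y,r)^2$, since a priori $d(y,r)^2=1$ does not by itself say that $y$ is a leaf with neighbour $r$. This is harmless: if $d(y,r)\ge 2$ then already $H_{yr}\ge 4>1$, so equality in $RC(r)\ge n-1$ forces $d(y,r)=1$ for all $y\neq r$ and then $H_{yr}=1$, at which point Corollary~\ref{minmax} applies verbatim.
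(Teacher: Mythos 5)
Your argument is correct and matches the paper's intent exactly: the theorem is stated with only the remark that it ``follows immediately from the lower bound in Theorem~\ref{hitbound}'', and your write-up simply makes explicit the bound $RC(r)=\sum_{y\neq r}H_{yr}\ge n-1$ together with the equality characterization from Corollary~\ref{minmax}. Your extra care about distinguishing $H_{yr}\ge d(y,r)^2$ from the sharp equality case, and the sanity check via $RC(r)=(2n-1)D(r)-2W(T)$, are both sound and add nothing problematic.
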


As one would expect, the maximum is attained by a path:

\begin{theo}
The maximum value of $RC(r)$ among all trees of order $n \geq 2$, rooted at a vertex $r$, is $n(n-1)(4n-5)/6$, and it is only attained by a path, rooted at one of its ends.
\end{theo}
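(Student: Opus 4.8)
The plan is to reduce to paths via a sequence of transformations, exactly parallel to the treatment of the maximum of $CC(r)$, and then evaluate the path case directly. By Theorem~\ref{RCW} we have $RC(r) = (2n-1)CC(r) - 4(n-1)W(T) + \text{(const)}$; unwinding the constant via the $v$-independent quantity, the cleanest route is to use the formula $RC(r) = (2n-1)D(r) - 2W(T)$ derived in Section~\ref{secTrees}. Thus maximizing $RC(r)$ means maximizing $(2n-1)D(r) - 2W(T)$ over all rooted trees $(T,r)$ of order $n$. Since both $D(r)$ and $-W(T)$ pull in opposite directions in general, one cannot simply invoke the known extremal trees for each separately; instead I would mimic the edge-contribution bookkeeping used for $CC(r)$.

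First I would rewrite $(2n-1)D(r) - 2W(T)$ as a sum over edges. Using $D(r) = \sum_e |B_r(e)|$ and $W(T) = \sum_e |A_r(e)||B_r(e)|$ (both recorded in the proof of Theorem~\ref{th1}), we get
\[
RC(r) = \sum_{e \in E(T)} |B_r(e)| \bigl( (2n-1) - 2|A_r(e)| \bigr) = \sum_{e \in E(T)} |B_r(e)| \bigl( 2|B_r(e)| - 1 \bigr),
\]
using $|A_r(e)| + |B_r(e)| = n$. So $RC(r) = \sum_e (2|B_r(e)|-1)|B_r(e)|$, a pleasantly symmetric counterpart to the formula $CC(r) = \sum_e (2|A_r(e)|-1)|B_r(e)|$. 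Now $f(b) := b(2b-1)$ is convex and increasing in $b$ for $b \geq 1$, so to maximize the sum we want the multiset of ``away-side'' sizes $\{|B_r(e)|\}$ to be as spread out / as large as possible. This strongly suggests the path rooted at an end, for which the values $|B_r(e)|$ run through $1, 2, \dots, n-1$.

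Next I would carry out the reduction in two steps. Step one: show that replacing each branch at the root by a path on the same number of vertices does not decrease $RC(r)$ — here, since each edge $e$ in a branch has $A_r(e) \ni r$, straightening the branch into a path can only increase each $|B_r(e)|$ or leave the family of values no smaller in the majorization sense, and $f$ is increasing; so we may assume $T$ is a subdivided star centred at $r$. Step two: for a subdivided star with $k \geq 2$ legs, a leg-merging move (detach one leg and append it to the end of another) — the same move used in the $CC(r)$ proof — increases $RC(r)$: along the augmented leg each $|B_r(e)|$ strictly increases, and $f$ is strictly increasing, while all other edges are unaffected; iterating collapses everything to a single path. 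Finally, for the path one must decide the optimal root: $RC(r) = \sum_e (2|B_r(e)|-1)|B_r(e)|$ with the $|B_r(e)|$ being, for a root at distance $j$ from one end, the numbers $1,\dots,j$ and $1,\dots,n-1-j$; since $f$ is convex this is maximized at $j = 0$ or $j = n-1$, i.e. the root is an endpoint, giving $|B_r(e)| = i$ for $i = 1,\dots,n-1$ and hence
\[
RC(r) = \sum_{i=1}^{n-1} i(2i-1) = 2\sum_{i=1}^{n-1} i^2 - \sum_{i=1}^{n-1} i = \frac{(n-1)n(2n-1)}{3} - \frac{(n-1)n}{2} = \frac{n(n-1)(4n-5)}{6}.
\]

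The main obstacle I anticipate is making the branch-straightening step (step one) fully rigorous: it is intuitively clear that uncurling a branch increases the away-side sizes along it, but one should verify there is no subtle interaction — in fact the key point is that for an edge $e$ inside branch $T_i$, the set $A_r(e)$ always contains the root and everything outside $T_i$, so $|B_r(e)| \leq |T_i| - 1$ for all such $e$, and straightening $T_i$ into a path makes the multiset $\{|B_r(e)| : e \in T_i\}$ equal to $\{1, 2, \dots, |T_i| - 1\}$, which majorizes any other attainable multiset of $|T_i| - 1$ positive integers with the correct total structure; convexity of $f$ then finishes it. Once that monotonicity-by-majorization lemma is in hand, steps two and three are routine, and the constant-independence assertion is already furnished by Theorem~\ref{RCW}.
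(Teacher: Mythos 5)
Your proposal is correct in substance but follows a genuinely different route from the paper. The paper argues by induction on $n$: for the maximizing root $r$ (necessarily a leaf, since $RC(r)=(2n-1)D(r)-2W(T)$ and $D$ is maximized at a leaf), it writes $RC(r)=RC_{T\setminus\{r\}}(v)+(n-1)H_{vr}$ where $v$ is the neighbour of $r$, notes that $H_{vr}=2m-1=2n-3$ by the equality case of Theorem~\ref{hitbound} (every walk must pass through $v$ before reaching the leaf $r$), and applies the induction hypothesis to $T\setminus\{r\}$ rooted at $v$; the equality characterization falls out of the induction. Your route instead converts everything into the edge decomposition $RC(r)=\sum_e(2|B_r(e)|-1)|B_r(e)|$, a correct and attractive counterpart to the paper's formula for $CC(r)$, and then argues that the multiset of subtree sizes $\{|B_r(e)|\}$ is extremal for the path rooted at an end. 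The one place where your sketch is looser than it should be is the branch-straightening/leg-merging reduction: ``straightening a branch can only increase each $|B_r(e)|$'' has no canonical edge-to-edge matching behind it, and ``majorizes any other attainable multiset'' is asserted rather than proved. However, your approach can be closed more cheaply than you suggest: for every $j\ge 1$, the number of edges $e$ with $|B_r(e)|\ge j$ is at most $n-j$ (the lower endpoints of such edges are exactly the non-root vertices with at least $j$ descendants; this set is ancestor-closed, a minimal element of it has at least $j-1$ proper descendants lying outside it, and $r$ also lies outside it), so the $k$-th largest value of $|B_r(e)|$ is at most $n-k$. Since $f(b)=b(2b-1)$ is increasing, this yields $RC(r)\le\sum_{i=1}^{n-1}i(2i-1)=n(n-1)(4n-5)/6$ by termwise comparison, with no convexity or majorization needed, and equality forces the subtree sizes to be exactly $1,2,\dots,n-1$, i.e.\ $T$ is a path rooted at an end. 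Your closed-form evaluation for the path is correct. In terms of what each approach buys: the paper's induction is shorter and reuses Theorem~\ref{hitbound}, while yours makes transparent why the path rooted at an end wins (it simultaneously maximizes every order statistic of the subtree-size multiset) and would generalize to any functional of the form $\sum_e f(|B_r(e)|)$ with $f$ increasing.
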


\begin{proof}
We proceed by induction on $n$. For $n=2$, the statement is trivial. Now let $T$ be a tree of order $n > 2$ and $r$ a vertex for which $RC(r)$ attains its maximum. As in the proof of Theorem~\ref{thm:minCC}, $r$ has to be a leaf. Let $v$ be its neighbour. Then
$$RC(r) = RC_{T \setminus \{r\}}(v) + (n-1)H_{vr},$$
where $RC_{T \setminus \{r\}}(v)$ is the reverse cover cost of $v$ in the reduced tree $T \setminus \{r\}$, since a random walk starting at a vertex other than $r$ has to reach $v$ first before it can reach $r$. By Theorem~\ref{hitbound}, we have $H_{vr} = 2m-1 = 2n-3$, and by the induction hypothesis, $RC_{T \setminus \{r\}}(v) \leq (n-1)(n-2)(4n-9)/6$, with equality if and only if $T \setminus \{r\}$ is a path and $v$ one of its endpoints. Putting the two observations together, we reach the desired result.
\end{proof}

\comment{
\section{General formulas} \label{secMain}

In this section we prove \Tr{thMain}. 

We will derive our formulas for $CC, RC$ and their weighted versions by applying Tetali's formula \eqref{tet}. It turns out that this formula is easier to work with when considering differences; we start with calculating some auxilliary quantities: \fe\ vertex $x$ we have by \eqref{tet}

\begin{align} \label{dif}
CC(x) - RC(x) &=  \sum_y (H_{xy} - H_{yx}) \notag \\
  &= \sum_y \frac12 \sum_w d(w)(r(x,y)+r(w,y)-r(w,x)- r(x,y)-r(w,x)+r(w,y) ) \notag \\ 
  &= \frac12 \sum_y  \sum_w d(w) (2r(w,y)-2r(w,x)) \notag \\
  &= 2K_\pi(G) - n \sum_w d(w)r(w,x) = 2K_\pi(G) - n R_\pi(x).
\end{align}

Moreover, by \eqref{com} we have 
\labtequc{sum}{$CC(x) + RC(x) = \sum_y \kappa_{xy} = \sum_y 2m r(x,y) = 2m R(x)$.}

Adding \eqref{dif} to \eqref{sum} we obtain $CC(x) = mR(x) - \frac{n}{2} R_\pi(x)+ K_\pi(G)$, while subtracting we obtain $RC(x) = m R(x) + \frac{n}{2} R_\pi(x) - K_\pi(G))$. This proves the first part of \Tr{thMain}. For the second part we proceed similarly: we have 

\begin{align*} 
CC_\pi(x) - RC_\pi(x) &=  \sum_y d(y)(H_{xy} - H_{yx}) \notag  \\
  &=  \sum_y  \sum_w d(y)d(w) (r(w,y)-r(w,x)) \notag \\
  &= 2K_{\pi^2}(G) - 2m\sum_w d(w)r(w,x) = 2K_{\pi^2}(G) - 2mR_\pi(x).
\end{align*}

and
\labtequc{sum2}{$CC_\pi(x) + RC_\pi(x) = \sum_y d(y)\kappa_{xy} = \sum_y d(y)2m r(x,y) = 2m R_\pi(x)$.}

Adding and subtracting as above yields
$CC_\pi(x) = mR_\pi(x) + K_{\pi^2}(G) -mR_\pi(x) = K_{\pi^2}(G)$ and
$RC_\pi(x) = mR_\pi(x) - K_{\pi^2}(G) +mR_\pi(x) = 2mR_\pi(x) -K_{\pi^2}(G)$. This completes the proof of \Tr{thMain}.

The last equality implies that $RC_\pi(x)\leq RC_\pi(y)$ holds \iff\ $R_\pi(x) \leq R_\pi(y)$. Using \eqref{tet} as above it is easy to see that the latter is also equivalent to $H_{yx}\leq H_{xy}$. This shows that the equivalence of inequalities \ref{Bii}--\ref{Biv} of \Tr{thBev} holds for arbitrary graphs if we replace the function $D_\pi$ in \ref{Bii} by the function $R_\pi$. Using the fact that $R_\pi=D_\pi$ in the case of trees, the remarks after \Tr{th1}, and \eqref{Dpi} completes the proof of \Tr{th1}.

Except for the aforementioned cases, all other equivalences in  \Tr{th1} fail for general graphs even if we replace $D$ and $D_\pi$ by their generalisations $R$ and $R_\pi$. To show this we will use the following fact which is interesting on its own right.
}

\section{\Tr{thBev} for non-trees} \label{secEx}

Tetali's formula \eqref{tet} shows that condition \ref{Bii} of \Tr{thBev} is still equivalent to \ref{Biii} for general graphs if $D_{\pi}$ is replaced by $R_{\pi}$. Theorem~\ref{thMain} proves that both are equivalent to \ref{Biv} for general graphs as well.
We now construct some examples showing that except for these, all other equivalences fail for non-trees (with $D,D_\pi$ replaced by $R,R_\pi$).

Having seen \Cr{correg}, it is easy to construct examples of non-trees in which inequality \ref{Bvi} of \Tr{thBev} is not equivalent to any of the others. In the regular graph of \fig{pathlike} for example, the functions $R$ and $R_\pi=3R$ are non-constant, and by \Tr{thMain} so are $RC$ and $RC_\pi$.

\showFig{pathlike}{A regular graph with non-constant $R_\pi,R, RC$ and $RC_\pi$.}

Our next example shows  that \ref{Bi} is not equivalent to any of the other inequalities either: in the graph of \fig{karos}, we have $R(x)=R(y)$ but $R_\pi(x)\neq R_\pi(y)$ as the reader will easily check. Combined with \Tr{thMain}, this implies that $RC(x)\neq RC(y)$ and $CC(x)\neq CC(y)$.

\showFig{karos}{$R(x)=R(y)$ but $R_\pi(x)\neq R_\pi(y)$.}

Finally, in order to show that \ref{Bv} is not equivalent to \ref{Biv}, it suffices by \Tr{thMain} to have an example in which $R(x)-R(y)> R_\pi(y) -R_\pi(x)>0$. The graph of \fig{brush} is such an example: $R(x)-R(y)$ equals the number of vertices in the star minus the number of vertices in the clique, and so $R(x)-R(y)= l-k$. Similarly, $R_\pi(y)- R_\pi(x)$ equals the sum of degrees in the clique minus the sum of degrees in the star, which is $k(k-1)/2-l$. Letting e.g.\ $l=k(k+1)/2$, we therefore obtain $R_\pi(y) -R_\pi(x) = k$, yielding  the desired $R(x)-R(y)> R_\pi(y) -R_\pi(x)>0$.

\showFig{brush}{A graph showing that \ref{Bv} is not equivalent to \ref{Biv}. The circle on the left stands for a $k$-vertex clique joined to $x$ by an edge, while an $l$-vertex star is attached to $y$.}

\bibliographystyle{amsalpha}
\bibliography{collective}

\end{document}